\newtheorem{lem}{Lemma}[section]
\newtheorem{thm}[lem]{Theorem}
\begin{document}

\title{A note on $Oct_{1}^{+}$-free graphs and $Oct_{2}^{+}$-free graphs}

\author{ Wenyan Jia$^1$, \quad Shuai Kou$^1$, \quad Chengfu Qin$^2$, \quad Weihua Yang$^1$\footnote{Corresponding author. E-mail: ywh222@163.com,~yangweihua@tyut.edu.cn}\\
\\ \small $^1$Department of Mathematics, Taiyuan University of
Technology,\\
\small  Taiyuan Shanxi-030024,
China\\
\small $^2$Department of Mathematics, Guangxi Teachers Education University, Nanning}
\date{}
\maketitle

{\small{\bf Abstract:}
Let $Oct_{1}^{+}$ and $Oct_{2}^{+}$ be the planar and non-planar graphs that obtained from the Octahedron by 3-splitting a vertex respectively. For $Oct_{1}^{+}$, we prove that a 4-connected graph is $Oct_{1}^{+}$-free if and only if it is $C_{6}^{2}$, $C_{2k+1}^{2}$ $(k \geq 2)$ or it is obtained from $C_{5}^{2}$ by repeatedly 4-splitting vertices. We also show that a planar graph is $Oct_{1}^{+}$-free if and only if it is constructed by repeatedly taking 0-, 1-, 2-sums starting from $\{K_{1}, K_{2} ,K_{3}\} \cup \mathscr{K} \cup \{Oct,L_{5} \}$, where $\mathscr{K}$ is the set of graphs obtained by repeatedly taking the special 3-sums of $K_{4}$. For $Oct_{2}^{+}$, we prove that a 4-connected graph is $Oct_{2}^{+}$-free if and only if it is planar, $C_{2k+1}^{2}$ $(k \geq 2)$, $L(K_{3,3})$ or it is obtained from $C_{5}^{2}$ by repeatedly 4-splitting vertices.

\vskip 0.5cm  Keywords: Graph minor ; Split ; Octahedron;

\section{Introduction}

Let $Oct$ denote the Octahedron. We only consider simple graphs in this article. Let $G$ and $H$ be two graphs. $H$ is called a minor of $G$ denoted by $H \leq_{m} G$ if it can be generated by deleting or contracting edges from $G$. If $G$ has no minor isomorphic to $H$, $G$ is $H$-free. Assume $v$ is a vertex of a 3-connected graph $G$ such that $d(v) \geq 4$. Given two sets $A,B\subseteq N_{G}(v)$, where $N_{G}(v)$ is the set of vertices adjacent to $v$ in $G$ and $A \cap B=\emptyset$, $min\{|A|,|B|\}\geq 2$. We mean a $3$-$split$ of $v$ is the operation of first deleting $v$ from $G$ and adding two new adjacent vertices $v'$, $v''$, then joining $v'$ to vertices in $A$ and $v''$ to vertices in $B$. It is clearly that a graph obtained by 3-splitting a vertex of a 3-connected graph will also be 3-connected.

For a given graph $H$, characterizing $H$-free graphs is a difficult topic in graph theory. We focus on 3-connected graph $H$ in this paper. Tutte's Wheel Theorem states that every 3-connected graph can be obtained from a wheel by repeatedly adding edges and 3-splitting vertices~\cite{W.T.Tutte}. By this theorem, we can generate all 3-connected graphs.

Let $G_{1}$, $G_{2}$ be two disjoint graphs with more than $k$ vertices. The 0-sum of $G_{1}$ and $G_{2}$ is the disjoint union of $G_{1}$, $G_{2}$. The 1-sum of $G_{1}$, $G_{2}$ is obtained by identifying one vertex of $G_{1}$ with one vertex of $G_{2}$. The 2-sum of $G_{1}$, $G_{2}$ is obtained by identifying one edge of $G_{1}$ with one edge of $G_{2}$, and the common edge could be deleted after identification. The 3-sum of $G_{1}$, $G_{2}$ is obtained by identifying one triangle of $G_{1}$ with one triangle of $G_{2}$, and some of the three common edges could be deleted after identification.

Next, we introduce some known results for $H$-free graphs where $H$ is 3-connected and we order the results according to the number of edges of $H$.

Ding~\cite{G.Ding} characterized all $H$-free graphs for 3-connected $H$ with at most 11 edges, including $Oct\backslash e$. Let $\aleph$ denote the set of graphs obtained by 3-summing wheels and Prisms, and let $K_{5}^{\triangle}$ denote the graph obtained by 3-summing Prism and $K_{5}$.

\begin{thm}[\cite{G.Ding}]\label{thm1.1}
$Oct\backslash e$-free graphs consists of graphs in $\aleph$ and 3-connected minors of $V_{8}$, $Cube$, and $K_{5}^{\triangle}$.
\end{thm}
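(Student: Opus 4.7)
The plan is to handle both directions by separating the sum-decomposition structure from the finite list of exceptional 3-connected minors. First, for the easy direction, I would verify that every graph in the proposed family is $Oct\backslash e$-free. Wheels $W_n$ have a universal vertex, so any $Oct\backslash e$-model would have to be witnessed after contracting that vertex, and one checks that the remaining $W_n\setminus\{\text{hub}\}=C_n$ plus parallel edges cannot realize the required two non-adjacent degree-4 vertices. Prisms are checkable directly. Then I would prove a ``crossing lemma'' for the 3-sum: if $G=G_1\oplus_3 G_2$ along a triangle $T$ and $G$ has an $Oct\backslash e$-minor, then in at least one $G_i$ the restricted model uses every vertex of $T$ as a branch vertex or boundary of a branch set, forcing $G_i$ to contain an auxiliary 3-connected minor too large to fit inside a wheel or a Prism. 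This will close the easy direction after a finite check that $V_8$, Cube, and $K_5^\triangle$ are themselves $Oct\backslash e$-free (done by enumerating the few possible minor-models and verifying none arise).

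For the converse I would take a 3-connected $Oct\backslash e$-free graph $G$ and induct on $|V(G)|$. If $G$ has a triangle 3-separation, split along it: the two sides are smaller $Oct\backslash e$-free 3-connected graphs (up to adding the separating triangle), so by induction each lies in $\aleph\cup\{\text{3-conn.\ minors of }V_8,\,Cube,\,K_5^\triangle\}$. A secondary lemma is then needed to show that 3-summing a minor of $V_8$, $Cube$, or $K_5^\triangle$ with any other $Oct\backslash e$-free piece either stays inside $\aleph$ (because the minor in question is already a wheel or Prism up to the triangle interface) or creates $Oct\backslash e$, so the recursion collapses correctly into the stated family.

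The remaining and genuinely hard case is when $G$ has no separating triangle, i.e.\ $G$ is essentially 4-connected. Here I would run a Splitter-Theorem argument (Seymour's splitter theorem applied to the 3-connected matroid of $G$) anchored at the small 4-connected $Oct\backslash e$-free graphs $K_5$ and the prism, climbing up via single edge-additions and 3-splits. At each step one must show that the only 3-splits or edge-additions which avoid creating an $Oct\backslash e$-minor are those that keep us inside (a 3-connected minor of) $V_8$, $Cube$, or $K_5^\triangle$; every other move introduces two triangle-disjoint degree-$\ge4$ vertices whose neighborhoods overlap in the pattern needed for $Oct\backslash e$.

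The principal obstacle is this last splitter analysis: ruling out all ways a 4-connected $Oct\backslash e$-free graph could grow beyond $V_8$, Cube, and $K_5^\triangle$. It requires a careful case distinction on how the two new vertices produced by a 3-split interact with the existing triangles and degree-4 vertices, and this is where the bulk of the combinatorial effort lies; the earlier sum-decomposition steps are comparatively routine once the crossing lemma is in hand.
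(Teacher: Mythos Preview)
This theorem is not proved in the present paper: it is quoted from Ding and Liu~\cite{G.Ding} as background, with no argument supplied here. Consequently there is no ``paper's own proof'' to compare your proposal against.

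That said, a brief remark on your sketch. The overall architecture---verify the easy direction by direct checks plus a 3-sum crossing lemma, then for the converse split off separating triangles and run a Seymour splitter-theorem ascent in the internally 4-connected case---is the standard shape for excluded-minor theorems of this type and is plausible in outline. But two points deserve caution. First, your ``secondary lemma'' asserting that 3-summing a minor of $V_8$, $Cube$, or $K_5^{\triangle}$ with another $Oct\backslash e$-free piece either stays in $\aleph$ or produces $Oct\backslash e$ is doing a great deal of work and is not obviously true as stated; the class $\aleph$ is closed under 3-sums of wheels and Prisms only, so you would need to show that the relevant 3-connected minors of $V_8$, $Cube$, $K_5^{\triangle}$ that actually contain triangles are themselves wheels or Prisms, or else that any nontrivial 3-sum involving them creates the forbidden minor. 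Second, the statement of Theorem~\ref{thm1.1} as recorded here is implicitly about \emph{3-connected} $Oct\backslash e$-free graphs (that is the scope of~\cite{G.Ding}); your proposal mixes this with general $Oct\backslash e$-free graphs in places, which muddles the induction base. Since the paper offers no proof of its own, neither of these is a discrepancy with the paper---but they are gaps you would need to close if you pursued the argument independently.
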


For 3-connected graphs with 12 edges, $V_{8}$-free graphs, $Cube$-free graphs and $Oct$-free graphs are characterized in~\cite{G.Ding Oct,J.Maharry and N.Robertson,J.Maharry Cube}.

\begin{thm}[\cite{G.Ding Oct}]\label{thm1.2}
A graph is Oct-free if and only if it is constructed by 0-, 1-, 2- and 3-sums starting from graphs in $\{K_{1}, K_{2}, K_{3}, K_{4}\} \cup \{C_{2n-1}^{2} : n\geq 3\} \cup \{L_{4}^{'}, L_{5}, L_{5}^{'}, L_{5}^{''}, P_{10}\}$ (see Figure 1).
\end{thm}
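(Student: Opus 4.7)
The proof naturally divides into two directions, sufficiency and necessity. Sufficiency asserts that every graph built from the listed base list by $0$-, $1$-, $2$-, and $3$-sums is $Oct$-free. The key structural fact driving this direction is that $Oct$ is $4$-connected: if $G = G_1 \oplus_k G_2$ with $k \le 3$ and $Oct$ were a minor of $G$ whose branch sets used both sides of the separator, then the separating set would induce a vertex cut of order at most $3$ in the $Oct$-model, contradicting the $4$-connectivity of $Oct$. Hence any $Oct$-minor of a $k$-sum lies inside a single summand. It therefore suffices to verify that each base graph is itself $Oct$-free: $K_1,\ldots,K_4$ are smaller than $Oct$; for the squares $C_{2n-1}^2$ one checks directly, using their regular triangulated structure and the fact that every contraction decreases the circumference, that $Oct$ is never obtained; and the five sporadic exceptions $L_4',L_5,L_5',L_5'',P_{10}$ are verified by inspection (Petersen being especially transparent because it has no triangles).

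For the necessity direction I would proceed by induction on $|V(G)|+|E(G)|$. If $G$ is disconnected, has a cut vertex, or has a $2$-separation, the standard $0$-, $1$-, or $2$-sum decomposition expresses $G$ as a sum of two strictly smaller graphs, each still $Oct$-free by minor-closedness, and the inductive hypothesis applies. If $G$ has a $3$-separation with both sides containing a vertex off the separator, I adjoin the triangle on the separating set to each piece to obtain a $3$-sum representation; the virtual edges not present in $G$ are then the ones ``deleted after identification'' in the paper's definition. After all such reductions the problem becomes the classification of $4$-connected $Oct$-free graphs, and the theorem follows once this class is shown to consist exactly of the $4$-connected members of the base list, namely $C_{2n-1}^2$ for $n \ge 3$ together with those of $L_4', L_5, L_5', L_5'', P_{10}$ that are $4$-connected.

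The classification of $4$-connected $Oct$-free graphs is the main obstacle, and where essentially all of the real work of the theorem lies. The plan is to let $G$ be such a graph and exploit $\delta(G) \ge 4$, focusing first on a vertex $v$ of degree exactly $4$ when one exists. The structure on $N(v)$ is highly constrained: whenever $N(v)$ supports enough edges or short paths, one can write down an explicit model of $Oct$ in $G$ by taking $v$ and its four neighbours as five of the six branch sets and using $4$-connectivity of $G \setminus v$ to realize the sixth through internally disjoint paths, contradicting $Oct$-freeness. The surviving local configurations force every degree-$4$ vertex to sit inside two triangles sharing only $v$, which globally forces $G$ to look like the square of an odd cycle and yields $G = C_{2n-1}^2$. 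The remaining case, where every vertex has degree $\ge 5$, is handled by a separate argument showing that minimum degree at least five together with $4$-connectivity quickly produces an $Oct$-minor in all but a handful of small graphs, leaving precisely the sporadic exceptions. This exhaustive neighbourhood-by-neighbourhood case analysis, rather than any single slick contraction, is the part of the proof I expect to be longest and most delicate.
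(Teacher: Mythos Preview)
This theorem is not proved in the present paper: it is quoted from Ding~\cite{G.Ding Oct} as a known result and invoked as a black box (notably in Case~2 of the proof of Theorem~\ref{thm1.5}). There is consequently no ``paper's own proof'' against which your proposal can be compared.

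For what it is worth, your sketch has one genuine soft spot should you ever try to turn it into a full proof. In the necessity direction you split $G$ along a 3-separator $\{a,b,c\}$, adjoin the virtual triangle to each side, and then say the pieces are $Oct$-free ``by minor-closedness''. But $G_i$ together with the triangle $abca$ is \emph{not} in general a minor of $G$: contracting the other side yields a single new vertex adjacent to $a,b,c$, not the edges $ab,bc,ca$. What is actually needed is the lemma that any 4-connected minor of a 3-sum already sits (up to the virtual triangle) inside one summand; this is true, but it requires its own argument using the 4-connectivity of $Oct$, and you have not supplied it. Likewise, your 4-connected classification is only a plan: the assertion that the surviving local picture at a degree-4 vertex ``globally forces $G=C_{2n-1}^2$'' is precisely the hard content of Ding's paper and does not follow from the sentence you wrote.
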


\input{Figure1.Tpx}

There are 51 3-connected graphs with 13 edges, but only two related results. One is for 4-connected $Oct^{+}$-free graphs, where $Oct^{+}$ denote the graph $Oct+e$~\cite{J.Maharry}. It can be seen that $Oct^{+}$ is isomorphic to $K_{6}$ with two parallel edges removed.

\begin{thm}[\cite{J.Maharry}]\label{thm1.3}
Every 4-connected graph that does not contain a minor isomorphic to $Oct^{+}$ is either planar or the square of an odd cycle.
\end{thm}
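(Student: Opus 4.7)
The plan is to prove the contrapositive: every $4$-connected graph $G$ that is non-planar and not equal to $C_{2k+1}^{2}$ for any $k\ge 2$ must contain $Oct^{+}$ as a minor. I would split the argument on whether $G$ contains $Oct$ as a minor.

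First I dispose of the case where $G$ is $Oct$-free. By Theorem~\ref{thm1.2}, such a $G$ is obtained by $0,1,2,3$-sums from the base family $\{K_{1},K_{2},K_{3},K_{4}\}\cup\{C_{2n-1}^{2}:n\ge 3\}\cup\{L_{4}',L_{5},L_{5}',L_{5}'',P_{10}\}$. Any proper sum of order at most three introduces a vertex separator of size at most three, which contradicts $\kappa(G)\ge 4$; hence $G$ must itself be a single base graph. Inspecting Figure~1, the small complete graphs are too small to be $4$-connected and each of $L_{4}',L_{5},L_{5}',L_{5}'',P_{10}$ has a vertex of degree $3$, so $G=C_{2n-1}^{2}$ for some $n\ge 3$, contradicting our standing assumption.

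So we may assume $G$ contains an $Oct$-minor. Fix a model with branch sets $X_{1},\dots,X_{6}$, labeled so that the three non-edges of $Oct=K_{2,2,2}$ are $\{X_{1},X_{2}\},\{X_{3},X_{4}\},\{X_{5},X_{6}\}$. If any edge of $G$, or any path in $G\setminus\bigcup_{i}X_{i}$, joins two branch sets of one of these non-adjacent pairs, then contracting branch sets produces $Oct^{+}$ as a minor, and we are done. Otherwise every external attachment respects the $Oct$-adjacency. Under this restriction I would show that the non-planarity of $G$ cannot be realized: using the fact that each $X_{i}$ is joined to $G\setminus X_{i}$ by at least four internally disjoint paths (by $4$-connectivity), Menger's theorem allows the ``extra'' material inside and around each branch set to be drawn inside the faces of a fixed planar embedding of $Oct$; assembling these partial drawings gives a planar embedding of $G$, contradicting non-planarity.

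The main obstacle will be the second case: converting the topological hypothesis of non-planarity into a concrete combinatorial witness, namely an edge or path linking two non-adjacent branch sets. The delicate point is the rerouting step, which has to exploit $4$-connectivity to rule out \emph{all} planar extensions of the $Oct$-model. I expect the cleanest implementation to be an induction on $|V(G)|-6$: at each step one either locates the desired cross-connection directly, or ``unsplits'' an internal $3$-split hidden inside some $X_{i}$ to reduce to a smaller $4$-connected $Oct$-containing graph to which the inductive hypothesis applies.
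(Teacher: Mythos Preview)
The paper does not give a proof of Theorem~\ref{thm1.3}; it is quoted as a known result from Maharry~\cite{J.Maharry} and serves only as background for the paper's own theorems. There is therefore nothing in this paper to compare your proposal against.

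On the proposal itself, two remarks. Your Case~1 is clean but possibly circular: you invoke Theorem~\ref{thm1.2} (Ding, 2013) to handle the $Oct$-free case, yet Maharry's theorem is from 2008, and one should check that Ding's characterization does not itself rest on Theorem~\ref{thm1.3}. Even granting this, Case~1 is the easy half.

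Case~2 is where the theorem lives, and what you have written is a plan rather than a proof. The assertion that, absent a path between non-adjacent branch sets, the $4$-connected graph can be redrawn planarly by tucking the extra material into faces of a fixed embedding of $Oct$ is precisely the hard content; it is not at all clear that $4$-connectivity alone forces every bridge of the model to sit inside a single triangular face, nor that the local drawings patch together globally. Your proposed induction---``unsplitting'' a $3$-split inside some $X_i$---does not obviously preserve $4$-connectivity of the ambient graph (contracting an edge inside a branch set can create a $3$-cut), and you give no argument that such an unsplittable configuration must exist at each step. As written, Case~2 identifies the right dichotomy but leaves the substantive argument undone.
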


In this paper, we consider the two graphs that obtained by 3-splitting a vertex of the Octahedron. We denote the planar one by $Oct_{1}^{+}$ and the non-planar one by $Oct_{2}^{+}$ (as shown in Figure 2). It can be seen that $Oct_{1}^{+}$ and $Oct_{2}^{+}$ are 3-connected and they both have 13 edges. Our purpose is to characterize 4-connected $Oct_{1}^{+}$-free graphs and 4-connected $Oct_{2}^{+}$-free graphs. For $Oct_{1}^{+}$, we also characterize all planar $Oct_{1}^{+}$-free graphs by characterizing 3-connected planar $Oct_{1}^{+}$-free graphs.

\begin{figure}[htbp]
\centering
\includegraphics[height=3cm,width=6cm,angle=0]{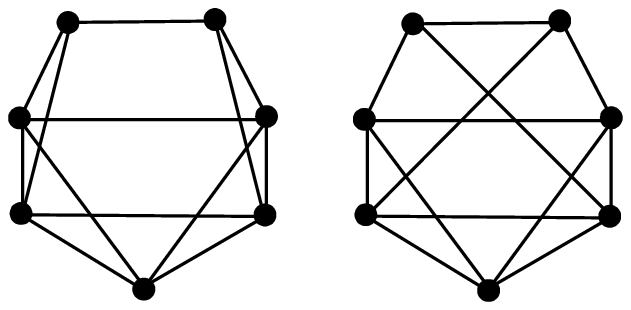}
\caption{$Oct_{1}^{+}$ , $Oct_{2}^{+}$}
\label{fig2}
\end{figure}

Let $v$ be a vertex of a 4-connected graph $G$. A 4-$split$ of $v$ produces a new graph $G'$ as follows. Given two sets $A,B\subseteq N_{G}(v)$, where $N_{G}(v)$ is the set of vertices adjacent to $v$ in $G$ and $min\{|A|,|B|\}\geq 3$. Remove $v$ from $G$ and add two new adjacent vertices $v'$, $v''$ such that $N_{G'}(v')=A\cup \{v''\}$, $N_{G'}(v'')=B\cup \{v'\}$. Clearly, $G'$ is also 4-connected.

 The following are the main results of this paper.

\begin{thm}\label{thm1.4}
A 4-connected graph is $Oct_{1}^{+}$-free if and only if it is $C_{6}^{2}$, $C_{2k+1}^{2}$ ($k\geq2$) or it is obtained from $C_{5}^{2}$ by repeatedly 4-splitting vertices. And $C_{6}^{2}$ is the only 4-connected planar $Oct_{1}^{+}$-free graph.
\end{thm}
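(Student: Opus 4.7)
My plan is to establish both directions of the biconditional, using Theorem~\ref{thm1.3} for a case split in the harder direction and an edge-contraction reduction afterwards.

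For the sufficiency direction, I would verify each listed class separately. Since $C_6^2 \cong Oct$ has only $6$ vertices and $Oct_1^+$ has $7$, $C_6^2$ trivially contains no $Oct_1^+$-minor. For $k\geq 2$, the graph $C_{2k+1}^2$ lies in the base set of Theorem~\ref{thm1.2}, so it is $Oct$-free; contracting the edge $v'v''$ of $Oct_1^+$ yields $Oct$, whence $Oct \leq_m Oct_1^+$ and any $Oct$-free graph is automatically $Oct_1^+$-free. For a graph $G$ obtained from $C_5^2 = K_5$ by iterated 4-splits, I would proceed by induction on the number of splits, with trivial base $K_5$; the inductive step reduces to a case analysis on the possible neighbor sets $A, B$ used in an admissible 4-split, showing that none of them produces an $Oct_1^+$-minor.

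For the necessity direction, let $G$ be a 4-connected $Oct_1^+$-free graph, and split on whether $G$ contains $Oct^+$ as a minor. If $G$ is $Oct^+$-free, Theorem~\ref{thm1.3} yields that $G = C_{2k+1}^2$ (giving $K_5 = C_5^2$, the zero-split member of the third class, when $k=1$, or a graph in the second class when $k\geq 2$) or $G$ is planar. The planar subcase reduces to the final assertion of the theorem: the unique 4-connected planar graph on at most $6$ vertices is $C_6^2 \cong Oct$, and I would show that every 4-connected planar graph on $\geq 7$ vertices contains $Oct_1^+$ as a minor, using the structural theory of 4-connected planar triangulations. If $G$ does contain $Oct^+$, the plan is to reduce $G$ by iterated edge contractions: since 4-connected graphs not isomorphic to squares of cycles possess a contractible edge (a Martinov-type result) and since $G/e$ inherits both 4-connectivity and $Oct_1^+$-freeness, iterated contraction lands at some terminal $C_n^2$. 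To force $n=5$, I would rule out termination at $C_6^2$ and at $C_{2k+1}^2$ for $k\geq 2$ by showing that every 4-split of each such graph already contains $Oct_1^+$ as a subgraph; hence $G$ is obtained from $C_5^2 = K_5$ by iterated 4-splits.

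The main obstacle is the case analysis required to show that every 4-split of $C_n^2$ for $n\geq 6$ contains an $Oct_1^+$-minor, since this is precisely what forces the contraction reduction to terminate at $K_5$ rather than at a larger square of a cycle. Exploiting the dihedral symmetry of $C_n^2$ this reduces to a small number of essentially distinct choices of $(A, B)$, but for each one must exhibit an explicit $Oct_1^+$-copy in the enlarged graph; the case $n=6$, where $C_6^2 \cong Oct$ and $Oct_1^+$ is itself a 3-split of $Oct$, requires particular care. A secondary technical obstacle lies in the planar subcase, where one must construct $Oct_1^+$-minors in all 4-connected planar graphs on $\geq 7$ vertices; this should follow from edge-count arguments together with the structure of such triangulations.
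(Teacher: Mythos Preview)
Your overall strategy diverges from the paper's and contains two genuine gaps.

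The paper does not use Theorem~\ref{thm1.3} at all. Its engine is the Qin--Ding chain theorem (Theorem~\ref{thm2.2}), which sharpens Martinov: for $G\notin\mathcal C\cup\mathcal L$ there is a $(G,C_6^2)$-chain when $G$ is planar and a $(G,K_5)$-chain when $G$ is non-planar. With this in hand the argument is short: Lemma~\ref{lem3.1} handles $\mathcal C\cup\mathcal L$ (and shows no member of $\mathcal L$ is $Oct_1^+$-free); Theorem~\ref{thm3.2} checks that every $4$-split of $C_6^2$ already contains $Oct_1^+$, so no planar $G$ outside $\mathcal C\cup\mathcal L$ survives; and for non-planar $G\notin\mathcal C\cup\mathcal L$ the $(G,K_5)$-chain \emph{is} the statement ``obtained from $C_5^2$ by repeated $4$-splits''.

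Your route via Martinov has two problems. First, the ``Martinov-type result'' you quote is false as stated: $4$-connected graphs that are line graphs of cyclically $4$-connected cubic graphs need not have a contractible edge, so the chain may terminate in $\mathcal L$, not just in $\mathcal C$. You would have to rule out $\mathcal L$ separately, which is precisely the content of Lemma~\ref{lem3.1}. Second, even granting termination in $\mathcal C$, Martinov alone does not let you pick the terminus: the chain might end at $C_7^2$, $C_9^2$, etc. Your proposed fix --- showing that every $4$-split of $C_{2k+1}^2$ for large $k$ contains $Oct_1^+$ --- is an unproved claim requiring its own case analysis; the Qin--Ding theorem eliminates this step by forcing the non-planar terminus to be $K_5$ directly.

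Finally, your inductive plan for the sufficiency of the third class cannot work. Under the paper's definition of $4$-split (no disjointness condition on $A,B$), two successive $4$-splits of $K_5$ with $A=B=N(v)$ yield $K_6$ and then $K_7$, and $K_7$ contains $Oct_1^+$ as a spanning subgraph. The paper in fact never proves this direction; the substance of Theorem~\ref{thm1.4} is the necessity, and the phrase ``obtained from $C_5^2$ by repeatedly $4$-splitting vertices'' should be read as a structural necessary condition (namely, the existence of a $(G,K_5)$-chain) rather than as a sufficient one.
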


\begin{thm}\label{thm1.5}
A planar graph is $Oct_{1}^{+}$-free if and only if it is constructed by repeatedly taking 0-, 1-, 2-sums starting from $\{K_{1}, K_{2} ,K_{3}\} \cup \mathscr{K} \cup \{Oct,L_{5} \}$, where $\mathscr{K}$ is the set of graphs obtained by repeatedly taking the special 3-sums of $K_{4}$ .
\end{thm}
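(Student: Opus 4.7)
The overall plan is to reduce Theorem 1.5 to a classification of $3$-connected planar $Oct_1^+$-free graphs and then invoke the standard fact that $k$-sum decompositions with $k\le 2$ preserve $H$-freeness when $H$ is $3$-connected.

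For the easy (``if'') direction I would first verify that each graph in the base set is planar and $Oct_1^+$-free. The graphs $K_1,K_2,K_3$ have fewer than $7$ vertices and so cannot contain $Oct_1^+$; for $Oct$ and $L_5$ this is a direct finite check; for $\mathscr{K}$ I would induct on the number of special $3$-sums of $K_4$ used, noting that gluing a fresh $K_4$ along a face triangle only adds a vertex $w$ of degree $3$, and one can argue that any $Oct_1^+$-minor in the new graph would force $w$ (via its branch set) to play one of the two degree-$3$ split vertices of $Oct_1^+$; rerouting then exhibits an $Oct_1^+$-minor in the previous graph, contradicting the inductive hypothesis. Then I would apply the standard lemma that if $G=G_1\oplus_k G_2$ with $k\le 2$ and $H$ is $3$-connected, then any $H$-minor of $G$ lies inside one of the $G_i$; this shows $0$-, $1$-, $2$-sums preserve $Oct_1^+$-freeness.

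For the hard (``only if'') direction I would take a planar $Oct_1^+$-free graph $G$ and decompose it into its $3$-connected components via inverse $0$-, $1$-, $2$-sums; each component is a minor of $G$, hence planar and $Oct_1^+$-free. It therefore suffices to show that every $3$-connected planar $Oct_1^+$-free graph $H$ lies in $\mathscr{K}\cup\{Oct,L_5\}$. If $H$ is $4$-connected, Theorem 1.4, together with the observation that $C_6^2\cong Oct$ and that $C_{2k+1}^2$ ($k\ge 2$) as well as the $4$-split iterates of $C_5^2$ are non-planar, forces $H=Oct$. If $H$ is only $3$-connected, I would choose a $3$-vertex cut $\{x,y,z\}$; by planarity a short argument lets me assume it is a facial triangle, making $H=H_1\oplus_3 H_2$ a \emph{special} $3$-sum of two smaller $3$-connected planar $Oct_1^+$-free graphs to each of which the induction hypothesis applies.

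The main obstacle is the critical subclaim that neither summand can equal $Oct$ or $L_5$ when the other is larger than $K_3$; equivalently, $Oct\oplus_3 G_2$ and $L_5\oplus_3 G_2$ already contain an $Oct_1^+$-minor whenever $G_2$ has a vertex outside the glued triangle. For the $Oct=K_{2,2,2}$ side I would realise the shared triangle as a facial triangle $\{a,b,c\}$, pick an extra vertex $w$ from $G_2$ (which the $3$-connectivity of $H$ forces to reach all of $\{a,b,c\}$ after suitable contractions), and then delete the two triangle edges $ab$ and $ac$: the resulting subgraph has $w$ and $a$ as two degree-$3$ vertices joined by an edge, with the remaining five vertices realising $Oct-v$, exhibiting $Oct_1^+$ explicitly. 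The $L_5$ case is analogous after identifying the appropriate triangle. Once this subclaim is established, both $H_i$ must lie in $\mathscr{K}$, whence $H$ itself is obtained by special $3$-sums of $K_4$'s and lies in $\mathscr{K}$, completing the classification of $3$-connected planar $Oct_1^+$-free graphs and hence Theorem 1.5.
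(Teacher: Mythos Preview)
Your overall scaffolding---reduce to $3$-connected pieces via Lemma~2.6 and then classify the $3$-connected planar $Oct_1^+$-free graphs---matches the paper. But your attack on that classification is quite different from the paper's and contains a real gap. The paper does \emph{not} induct on $3$-cuts. It splits instead on whether $H$ has an $Oct$-minor. If so, Seymour's splitter lemma (Lemma~2.7) produces a minor of $H$ obtained from $Oct$ by adding an edge or $3$-splitting a vertex; every such graph is non-planar or equal to $Oct_1^+$, forcing $H=Oct$. If $H$ is $Oct$-free, the paper invokes Ding's $Oct$-free theorem (Theorem~1.2) as a black box: $3$-connected $Oct$-free graphs are $3$-sums of $K_4$, $C_{2n-1}^2$, $L_4'$, $L_5$, $L_5'$, $L_5''$, $P_{10}$; only $K_4$ and $L_5$ are planar, and Lemmas~3.3 and~3.5 force the sums to be special, giving $H\in\mathscr{K}$ or $H=L_5$.

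The gap in your argument is the sentence ``by planarity a short argument lets me assume it is a facial triangle.'' A $3$-cut in a $3$-connected planar $Oct_1^+$-free graph need not be a triangle at all: take the cube $Q_3$, which is $3$-connected, planar, triangle-free, and $Oct_1^+$-free (it has only $12$ edges). Its $3$-cuts, e.g.\ the neighbourhood $\{2,4,5\}$ of a vertex, are independent sets. If you try to repair this by adjoining the missing triangle to form $H_1,H_2$, then $H_2$ is \emph{not} a minor of $H$: contracting the single vertex on the small side into one of $2,4,5$ produces only two of the three triangle edges, and an edge count shows $H_2$ has $12$ edges on $7$ vertices while every $7$-vertex minor of $Q_3$ has at most $11$. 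So you cannot inherit $Oct_1^+$-freeness for $H_2$ and the induction collapses. Handling non-triangle $3$-separations is exactly the work that Ding's theorem packages, and the paper leans on it rather than reproving it. (Your ``if'' direction for $\mathscr{K}$ can also be shortened: graphs in $\mathscr{K}$ are $Oct$-free by Theorem~1.2, hence $Oct_1^+$-free, with no rerouting argument needed.)
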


\input{Figure3.Tpx}

\begin{thm}\label{thm1.6}
A 4-connected graph is $Oct_{2}^{+}$-free if and only if it is planar, $C_{2k+1}^{2}$ ($k\geq2$), $L(K_{3,3})$ or it is obtained from $C_{5}^{2}$ by repeatedly 4-splitting vertices.
\end{thm}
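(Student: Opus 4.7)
Sufficiency is handled family by family. Since $Oct_2^+$ is non-planar, planar graphs are trivially $Oct_2^+$-free. For $C_{2k+1}^2$ with $k\geq 2$, I would verify the property by exploiting the 4-regular cyclic structure: any $Oct_2^+$-model would have to realize a $K_4$-like substructure of $Oct_2^+$ through branch sets on the squared cycle, and a direct case analysis on the possible arrangements of these arcs rules this out. For $L(K_{3,3})$, vertex- and edge-transitivity reduce the check to finitely many candidate branch-set configurations, which I would eliminate individually. For graphs obtained from $C_5^2$ by repeated 4-splitting, I would induct on the number of splits: the base $K_5$ has only five vertices and so cannot contain the seven-vertex $Oct_2^+$, and the inductive step---that 4-splitting a member of the family preserves $Oct_2^+$-freeness---hinges on propagating a structural invariant (for instance, the absence of a specific $K_{3,3}$-subdivision routed through the split pair $\{v',v''\}$) through each split operation.

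For the necessity direction, let $G$ be a 4-connected $Oct_2^+$-free graph. My first move is to apply Maharry's Theorem~\ref{thm1.3}: if $G$ is also $Oct^+$-free, then $G$ is planar or $C_{2k+1}^2$, and we are done. Otherwise $G$ contains $Oct^+$ as a minor, and the goal becomes to show $G$ is $L(K_{3,3})$ or is obtained from $C_5^2$ by repeated 4-splitting.

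The remaining case I would attack by a reverse-contraction argument. Invoking a Fontet--Martinov-type theorem, every 4-connected graph on at least six vertices admits a 4-contractible edge except for a short list of \emph{terminal} graphs consisting of the odd-cycle squares $C_{2k+1}^2$, $L(K_{3,3})$, and a few low-order exceptions. Iteratively contracting 4-contractible edges in $G$ yields a 4-connected terminal $G_0$, and since $Oct_2^+$-freeness is preserved under contraction, $G_0$ is itself $Oct_2^+$-free. If $G_0=K_5=C_5^2$, then $G$ is obtained from $C_5^2$ by repeated 4-splitting and the conclusion follows; if $G=G_0$ is $C_{2k+1}^2$ or $L(K_{3,3})$, the conclusion is also immediate. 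The only configurations to rule out are therefore those in which $G_0=L(K_{3,3})$ or $G_0=C_{2k+1}^2$ with $k\geq 3$ while $G\neq G_0$, i.e., $G$ is a proper iterated uncontraction of $G_0$.

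The main obstacle is thus to show that any nontrivial 4-split of $L(K_{3,3})$, and any nontrivial 4-split of $C_{2k+1}^2$ for $k\geq 3$, must contain $Oct_2^+$ as a minor. For $L(K_{3,3})$, I would parameterize a 4-split of a vertex (an edge of $K_{3,3}$) by the bipartition of its four neighbors in $L(K_{3,3})$ and construct, up to the symmetry group of $L(K_{3,3})$, an explicit $Oct_2^+$-subdivision using the new split edge $v'v''$ together with the ambient $K_{3,3}$-structure. For $C_{2k+1}^2$ with $k\geq 3$, the extra cycle length compared to $C_5^2$ provides enough ``slack'' to route six internally disjoint arcs through the squared cycle realizing the six branch sets of an $Oct_2^+$-model, with $\{v',v''\}$ playing the role of the 3-split pair. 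Complementary to this, the sufficiency claim that iterated 4-splits of $K_5$ do not themselves produce an $Oct_2^+$-minor will, I expect, be the other technically delicate ingredient and require a carefully chosen invariant that is inherited from $K_5$ and preserved under 4-splitting, but incompatible with any $Oct_2^+$-model.
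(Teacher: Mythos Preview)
Your sufficiency argument for the class ``obtained from $C_5^2$ by repeatedly 4-splitting vertices'' is not salvageable, because the claim itself is false. Two 4-splits of $K_5=C_5^2$ (taking $A=B=N(v)$ each time) produce $K_6$ and then $K_7$, and $K_7$ certainly has an $Oct_2^+$-minor. So the ``structural invariant preserved under 4-splitting'' you hope to find does not exist, and your inductive step cannot go through. The paper's ``if and only if'' is loose on this point: what is actually proved is the \emph{only if} direction together with sufficiency for the three explicit classes (planar, $C_{2k+1}^2$, $L(K_{3,3})$). The fourth class is a catch-all for the remaining non-planar $Oct_2^+$-free graphs, not a class all of whose members are $Oct_2^+$-free.

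For necessity, your route via Theorem~\ref{thm1.3} plus a Fontet--Martinov reduction is more laborious than needed and leaves real work undone: you would still have to show that every nontrivial 4-split of $L(K_{3,3})$ and of $C_{2k+1}^2$ $(k\ge 3)$ contains $Oct_2^+$, and these are not obvious (nor, in general, true---some such splits land back in $\mathcal{C}\cup\mathcal{L}$ or are planar). The paper instead applies the Qin--Ding chain theorem (Theorem~\ref{thm2.2}) directly. If $G\in\mathcal{C}$, Lemma~\ref{lem3.6} shows $G$ is $Oct_2^+$-free (even squares are planar, odd squares are $Oct$-free). If $G\in\mathcal{L}$, Lemma~\ref{lem3.7} shows $G$ is planar or $L(K_{3,3})$, the latter checked $Oct_2^+$-free by a short case analysis, while non-planar $L(H)$ with $H\neq K_{3,3}$ contains $L(V_8)\ge_m Oct_2^+$ via Lemma~\ref{lem2.4}. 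If $G\notin\mathcal{C}\cup\mathcal{L}$, then either $G$ is planar, or Theorem~\ref{thm2.2} yields a $(G,K_5)$-chain, i.e., $G$ arises from $C_5^2$ by repeated 4-splits. No analysis of 4-splits of terminal graphs is required.
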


\section{Preliminaries}

In this section, we introduce some definitions and known results we will use in section 3.

A $separation$ of $G$ is an ordered pair of subgraphs $(H,K)$ such that $E(H) \cap E(K) =\emptyset$, $H \cup K =G$, and $|E(H)|\geq |V(H) \cap V(K)|\leq |E(K)|$. A separation $(H,K)$ is called a $k$-$separation$ if $|V(H) \cap V(K)|=k$. A $cyclic$ $separation$ is a separation $(H,K)$ in which both $H$ and $K$ contain circuits. Suppose $k$ is an integer greater than two. A graph $G$ is $cyclically$ $k$-$connected$ if it is 2-connected, $|E(G)|-|V(G)|+1\geq k$ and there does not exist a cyclic $k'$-separation of $G$ for $k' \leq k$.

A graph is $cubic$ if it is 3-regular. The graph $L(H)$ is called the $line$ $graph$ of $G$ if $V(L(G)) = E(G)$, and for any two vertices $e$, $f$ in $V(L(G))$, $e$ and $f$ are adjacent vertices if and only if they are adjacent edges in $G$. Let $\mathcal{C}=\{C_{n}^{2}:n\geq5\}$ and $\mathcal{L}=\{L(H)$: $H$ be the cubic cyclically 4-connected graph$\}$. A $(G_{0}, G_{n})$-$chain$ is a sequence of 4-connected graphs $G_{0}, G_{1},...,G_{n}$ and each $G_{i}$ $(i < n)$ has an edge $e_{i}$ such that $G_{i}/e_{i}=G_{i+1}$.

There is a classical result of Martinov for 4-connected graphs, which is known as chain theorem.

\begin{thm}[\cite{N.Martinov}]\label{thm2.1}
For every 4-connected graph $G$, there exists a sequence of 4-connected graphs $G_{0}, G_{1},...,G_{n}$ such that $G_{0}=G$, $G_{n} \in \mathcal{C} \cup \mathcal{L}$, and every $G_{i}$ $(i < n)$ has an edge $e_{i}$ for which $G_{i}/e_{i}=G_{i+1}$.
\end{thm}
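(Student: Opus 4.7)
The plan is to prove Theorem~\ref{thm2.1} by iterated contraction: starting from $G_{0}=G$, whenever $G_{i}\notin \mathcal{C}\cup\mathcal{L}$ I must produce an edge $e_{i}$ for which $G_{i}/e_{i}$ is still 4-connected, and then set $G_{i+1}=G_{i}/e_{i}$; since $|V(G_{i+1})|=|V(G_{i})|-1$, the sequence terminates. Call an edge $e=uv$ of a 4-connected graph \emph{contractible} if $G/e$ is 4-connected, and \emph{non-contractible} otherwise. A routine argument shows that $e=uv$ is non-contractible exactly when there is a 4-vertex separator $T$ with $\{u,v\}\subseteq T$. Consequently, the theorem reduces to the following structural claim: if a 4-connected graph $G$ has no contractible edge, then $G\in\mathcal{C}\cup\mathcal{L}$.

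To prove the structural claim I would introduce 4-\emph{atoms}: minimum-cardinality components of $G-T$ as $T$ ranges over all 4-separators that contain both endpoints of some edge. Fix such an atom $A$ with witnessing separator $T=\{u,v,x,y\}$ (where $uv\in E(G)$) and write $B=V(G)\setminus(T\cup A)$. By 4-connectivity each of $u,v,x,y$ has a neighbor in $A$ and in $B$, and minimality of $A$ forces very rigid attachment patterns: every edge with both ends in $A\cup T$ is itself non-contractible, so it sits in a fresh 4-separator, and these separators can only nest inside $T\cup A$ in ways compatible with $|A|$ being smallest. One first checks that the vertices of $A$ all have degree exactly 4 in $G$, ruling out many local configurations.

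I would then split into two regimes driven by the atom size. When every atom has size $1$, each atom-vertex $a$ has exactly four neighbors, and chasing the non-contractibility of each of the edges $au,av,ax,ay$ matches up the local $N[a]$-patches into a cyclic pattern in which every vertex of $G$ is adjacent to its two cyclic predecessors and its two cyclic successors; a global connectivity argument identifies $G$ with $C_{n}^{2}$ for some $n\ge 5$. When some atom has size $\ge 2$, one shows atoms behave like ``edge-stars'' of a cubic auxiliary graph $H$: each atom together with its 4-separator forms the closed neighbourhood of an edge of $H$, vertices of $G$ correspond bijectively to edges of $H$, and $G$-adjacency matches edge-adjacency in $H$, giving $G=L(H)$. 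Cyclic 4-connectivity of $H$ is then inherited from 4-connectivity of $G$, since a cyclic $3$-edge-cut of $H$ whose sides each contain a circuit would translate into a 3-vertex-cut of $L(H)$ separating cycles.

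The main obstacle is the delicate bookkeeping in the case analysis that bridges the two regimes. One must propagate the local $C_{n}^{2}$-pattern established at one vertex to all of $G$, and rule out hybrid examples that look locally line-graph-like at one spot and locally cycle-square-like at another. Martinov handles this by double-counting overlapping 4-separators and exploiting the fact that two distinct minimum atoms are either disjoint or related by a rigid symmetry forced by the 4-cuts; reproducing that machinery carefully is the real work of the proof.
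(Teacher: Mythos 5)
The paper does not prove this statement at all: Theorem~\ref{thm2.1} is quoted verbatim from Martinov's 1982 paper and used as a black box, so there is no internal proof to compare your attempt against. Judged on its own, your reduction is the right one --- the chain theorem is equivalent, by iterating and counting vertices, to the classification of contraction-critical 4-connected graphs (those with no contractible edge) as exactly the members of $\mathcal{C}\cup\mathcal{L}$, and your two-regime picture (atoms of size $1$ forcing the $C_n^2$ pattern, larger atoms forcing a line-graph structure over a cubic $H$) is indeed the shape of Martinov's argument. One small inaccuracy: the equivalence ``$uv$ is non-contractible iff some 4-separator contains both $u$ and $v$'' fails for $K_5$, which has no 4-separator yet no contractible edge; this is harmless here only because $K_5=C_5^2\in\mathcal{C}$, but it should be stated with the caveat $|V(G)|\ge 6$ (or $\ge 7$, depending on convention).

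The genuine gap is that the structural claim --- the entire content of the theorem --- is asserted but not proved. You write that minimality of the atom ``forces very rigid attachment patterns,'' that a local analysis ``matches up the $N[a]$-patches into a cyclic pattern,'' and that the bridging case analysis is ``the real work of the proof,'' but none of these steps is carried out: you do not show that atom vertices have degree 4, you do not rule out hybrid configurations, and you do not verify that the auxiliary graph $H$ is cubic and cyclically 4-connected beyond naming the correspondence between edge-cuts of $H$ and vertex-cuts of $L(H)$. Since this classification occupies Martinov's whole paper, a proposal that defers exactly those arguments is an outline of a proof rather than a proof. For the purposes of the present paper the honest fix is the one the authors take: cite~\cite{N.Martinov} and do not reprove the result.
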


This result has been strengthened by Qin and Ding as follows.

\begin{thm}[\cite{C.Qin}]\label{thm2.2}
Let $G$ be a 4-connected graph not in $\mathcal{C} \cup \mathcal{L}$. If $G$ is planar, then there exists a $(G, C_{6}^{2})$-chain; if $G$ is non-planar, then there exists a $(G, K_{5})$-chain.
\end{thm}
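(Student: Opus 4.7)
The strategy is to bootstrap from Martinov's chain theorem (Theorem~\ref{thm2.1}) and refine the endpoint of the chain, so that in the planar case we land at $C_6^2$ and in the non-planar case at $K_5$.

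For the planar case, apply Theorem~\ref{thm2.1} to obtain a chain $G = G_0, G_1, \ldots, G_n$ of 4-connected graphs with $G_n \in \mathcal{C} \cup \mathcal{L}$. Since each $G_i$ is a minor of $G$, each $G_i$ is planar, and in particular so is $G_n$. It then suffices to observe that $C_6^2 \cong L(K_4)$ is the unique planar member of $\mathcal{C} \cup \mathcal{L}$: the cycle square $C_n^2$ equals $K_5$ for $n = 5$ and contains a $K_5$-minor (hence is non-planar) for every $n \geq 7$, while within $\mathcal{L}$ the line graph $L(H)$ of any cubic cyclically 4-connected $H$ with $|V(H)| \geq 6$ fails to be planar. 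As $G \notin \mathcal{C} \cup \mathcal{L}$ by hypothesis, $G \neq C_6^2$ and the chain is nontrivial.

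For the non-planar case, Martinov's theorem again provides a chain ending at some $H \in \mathcal{C} \cup \mathcal{L}$, but $H$ could a priori be any non-planar member, for instance $C_7^2$ or $L(K_{3,3})$. The plan is to extend the chain further through 4-connected graphs until $K_5$ is reached. The core is the following key lemma: for every non-planar $H \in \mathcal{C} \cup \mathcal{L}$ with $H \neq K_5$, there is an edge $e \in E(H)$ such that $H/e$ is still 4-connected. Concatenating with a fresh application of Theorem~\ref{thm2.1} to $H/e$ strictly reduces the vertex count of the current endpoint, and because $K_5$ is the unique 4-connected graph on five vertices, iterating forces termination at $K_5$. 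The whole procedure is organized as an induction on $|V(G)|$.

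The main obstacle is the key lemma itself, especially for $\mathcal{L}$, which is an infinite family. The intended approach is structural: for $H = L(F)$ with $F$ cubic cyclically 4-connected and $F \neq K_4$, one selects an edge $e$ of $F$ whose simple contraction is still cubic cyclically 4-connected, or more generally whose effect on $L(F)$ can be realized by a short chain of 4-connected contractions; a small modification of $F$ corresponds to a local operation in $L(F)$ whose connectivity can be controlled. For $H = C_n^2$ with odd $n \geq 7$, one checks directly that contracting a suitable chord yields a 4-connected graph on $n-1$ vertices that lies outside $\mathcal{C}$, so reapplying Theorem~\ref{thm2.1} produces a strictly smaller chain endpoint. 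With these two facts in hand, the induction runs and yields the desired $(G, K_5)$-chain.
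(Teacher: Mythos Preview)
The paper does not prove Theorem~\ref{thm2.2}; it is quoted from Qin and Ding~\cite{C.Qin} and used as a black box. So there is no ``paper's own proof'' to compare against. That said, your proposal contains genuine errors in both cases.

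\textbf{Planar case.} Your claim that $C_6^2$ is the unique planar member of $\mathcal{C}\cup\mathcal{L}$ is false. Every $C_{2k}^2$ with $k\ge 3$ is planar (it is the antiprism on $2k$ vertices); the paper itself uses this in Lemma~\ref{lem3.6}. Likewise, $\mathcal{L}$ contains infinitely many planar graphs: if $H$ is a planar cubic cyclically 4-connected graph (e.g.\ the Cube, or anything obtained from it by adding handles within the plane), then $L(H)$ is the medial graph of the plane embedding of $H$ and hence planar. So Martinov's chain for a planar $G$ may terminate at $C_{8}^2$, $C_{10}^2$, $L(\text{Cube})$, etc., and your argument gives no mechanism to continue the chain down to $C_6^2$.

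\textbf{Non-planar case.} Your ``key lemma'' --- that every non-planar $H\in\mathcal{C}\cup\mathcal{L}$ with $H\neq K_5$ has an edge $e$ with $H/e$ still 4-connected --- is false, and in fact is exactly what Martinov's theorem rules out. The content of Theorem~\ref{thm2.1} (see the title of~\cite{N.Martinov}, ``Uncontractible 4-connected graphs'') is precisely that $\mathcal{C}\cup\mathcal{L}$ is the set of 4-connected graphs in which \emph{no} single-edge contraction remains 4-connected. Concretely, $C_n^2$ is 4-regular and every edge lies in a triangle, so contracting any edge creates a vertex of degree~3; the same holds for line graphs of cubic graphs. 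Thus your inductive step cannot even start, and the proposed reduction to $K_5$ does not work. The actual proof in~\cite{C.Qin} requires substantially more than a straightforward bootstrap of Martinov's theorem.
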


Thus, any 4-connected graph that not in $\mathcal{C} \cup \mathcal{L}$ can be generated by repeatedly 4-splitting vertices from $C_{6}^{2}$ or $C_{5}^{2}$.

There are some good properties for cubic cyclically 4-connected graphs and the line graphs. Adding a $handle$ to $G$ means the operation that first subdivide two nonadjacent edges $e_{1}$ and $e_{2}$ of $G$, then add a new edge connecting two new internal vertices. And a graph $H$ is $topologically$ contained in a graph $G$, if there exists a subgraph of $G$ that is isomorphic to a subdivision of $H$.

\begin{lem}[\cite{N.C.Wormald}]\label{lem2.3}
The class of all cubic cyclically 4-connected graphs can be generated by repeatedly adding handles starting from $K_{3,3}$ or the $Cube$.
\end{lem}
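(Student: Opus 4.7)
The plan is to prove the lemma by induction on $|V(G)|$, by reverse reduction. Define the \emph{handle reduction} at an edge $e=uv$ of a cubic graph $G$ as the operation that deletes $e$ and then suppresses the two resulting degree-$2$ vertices $u$ and $v$. This is precisely the inverse of adding a handle, provided the outcome is a simple graph in which the two re-created edges are nonadjacent. The forward direction -- that adding a handle to a cubic cyclically $4$-connected graph produces another such graph -- is routine: cubicness is obvious since both subdivision vertices acquire the new handle edge as their third incidence, and a hypothetical cyclic $k$-separation ($k\le 3$) of the enlarged graph contracts, upon collapsing the handle, to a cyclic separation of the original graph of size no larger, contradicting the hypothesis.

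The real content is the reverse step: every cubic cyclically $4$-connected graph $G$ with $|V(G)|>8$ admits a \emph{reducible} edge $e=uv$, i.e.\ one whose handle reduction yields a simple cubic cyclically $4$-connected graph. Granted this, the induction terminates because any cubic cyclically $4$-connected graph with at most $8$ vertices must be $K_{3,3}$ or the Cube; this can be verified by direct enumeration, using that cyclic $4$-connectivity in a cubic graph is equivalent to having girth $\ge 4$ and no nontrivial $3$-edge-cut (i.e.\ no $3$-edge-cut separating two circuits).

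Call an edge $e=uv$ \emph{bad} otherwise. I would classify bad edges into two types. \textbf{Combinatorial badness:} writing $N(u)=\{v,a,b\}$ and $N(v)=\{u,c,d\}$, the four vertices $a,b,c,d$ are not all distinct, or one of $ab, cd$ is already an edge, or the reduced edges $ab, cd$ share a vertex (the handle addition requires nonadjacent edges). \textbf{Topological badness:} the reduced graph admits a cyclic $k$-separation with $k\le 3$, which pulls back through the suppressions and the deletion of $uv$ to a cyclic separation of $G$ of size at most $4$ touching the local neighborhood of $e$. The task is to show that if \emph{every} edge of $G$ is bad, then $G\in\{K_{3,3},\mathrm{Cube}\}$.

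The main obstacle is the topological case. Since $G$ is cyclically $4$-connected and cubic, every $3$-edge-cut of $G$ must isolate a single vertex; so when a $3$-separation appears in the reduced graph, it pulls back to a $3$- or $4$-edge-cut of $G$ that is pinned to the neighborhood $\{u,v,a,b,c,d\}$. If every edge of $G$ gives rise to such a local obstruction, then each vertex is forced to lie in many overlapping near-star $3$- and $4$-cuts, and cubicness restricts the number of vertices that can participate in this pattern. A careful local case analysis -- splitting according to which type (combinatorial vs.\ topological) of badness occurs at each edge incident to a fixed vertex, and exploiting girth$\,\ge 4$ -- forces $|V(G)|\le 8$, and then the two base graphs are identified. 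This case analysis is the only technically delicate part; the rest is bookkeeping.
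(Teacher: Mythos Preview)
The paper does not prove this lemma; it is quoted from Wormald \cite{N.C.Wormald} and used as a black box. So there is no paper proof to compare your attempt against, and your outline is indeed the standard inductive strategy behind Wormald's result.

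That said, your write-up contains a concrete error in the base case. You assert that ``any cubic cyclically $4$-connected graph with at most $8$ vertices must be $K_{3,3}$ or the Cube''. This is false: the M\"obius--Kantor ladder $V_{8}$ (the Wagner graph, $C_8$ together with its four main diagonals) is cubic, has girth $4$, and has no nontrivial $3$-edge-cut, hence is cyclically $4$-connected on $8$ vertices. The correct formulation of the inductive step is not ``$|V(G)|>8$'' but rather ``$G\notin\{K_{3,3},\mathrm{Cube}\}$''. One then checks directly that $V_{8}$ does admit a reducible edge (indeed $V_8$ is obtained from $K_{3,3}$ by adding a single handle), whereas the Cube does not: by edge-transitivity every handle reduction of the Cube yields the triangular prism, which has girth $3$. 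This is exactly why the Cube must appear as a second base graph in the statement, and your threshold $|V(G)|>8$ obscures that phenomenon.

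Beyond this, the substantive content of the lemma is the claim that every cubic cyclically $4$-connected graph other than $K_{3,3}$ and the Cube has a reducible edge, and you have only asserted that ``a careful local case analysis'' forces this. That analysis is genuinely nontrivial (it is the heart of Wormald's paper), and nothing in your sketch indicates how the interaction between combinatorially bad and topologically bad edges is controlled. As written, the proposal is a plausible plan rather than a proof.
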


\begin{lem}[\cite{G.Ding P}]\label{lem2.4}
If $G$ is a cyclically 4-connected non-planar cubic graph, then either $G=K_{3,3}$ or $G$ contains a subdivision of $V_{8}$.
\end{lem}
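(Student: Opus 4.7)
The plan is to combine Kuratowski's theorem with the structural fact that $V_{8}$ is obtained from $K_{3,3}$ by a single handle addition. Because $G$ is cubic, no subdivision of $K_{5}$ can sit inside $G$ (branch vertices of $K_{5}$ require degree $4$), so by Kuratowski $G$ must contain a subdivision $H$ of $K_{3,3}$. If $H=G$, then $G$ is a cubic subdivision of the cubic graph $K_{3,3}$, which forces every branch-path of $H$ to have length one and hence $G=K_{3,3}$. So from now on I may assume $H$ is a proper subgraph of $G$.

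I would then reduce the desired conclusion to the existence of an $H$-bridge (an internally disjoint $H$-path) in $G$ whose two feet lie on non-adjacent branch-paths of $H$. Using the fact that $\mathrm{Aut}(K_{3,3})$ acts transitively on ordered pairs of non-adjacent edges, one checks directly---by exhibiting on the resulting $8$-vertex cubic graph a Hamilton cycle whose four chords are diameters---that subdividing any two non-adjacent edges of $K_{3,3}$ and joining the new degree-$2$ vertices by a fresh edge produces exactly $V_{8}$. Therefore, once $H$ and such an $H$-bridge $P$ are in hand, the union $H \cup P$ is automatically a subdivision of $V_{8}$ in $G$: the two feet of $P$ subdivide the chosen non-adjacent branch-paths at the required degree-$2$ vertices, and the interior of $P$ subdivides the new ``handle'' edge of $V_{8}$.

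To produce such a bridge I would choose $H$ to minimise $|V(H)|$ among all $K_{3,3}$-subdivisions of $G$. Cubicity of $G$ forces every branch vertex of $H$ to have all three of its $G$-edges inside $H$, while each interior (degree-$2$) vertex of $H$ has exactly one external edge; following those external edges yields either an $H$-bridge or a vertex of $V(G)\setminus V(H)$ which is itself the endpoint of further $H$-bridges. The main obstacle, and the step demanding most of the care, is then to show that not all $H$-bridges can sit on a single branch-path or on a pair of adjacent branch-paths. In the first case, rerouting the branch-path through the bridge would yield a $K_{3,3}$-subdivision with strictly fewer vertices, contradicting the minimality of $|V(H)|$. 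In the second case, the external edges together with a bounded set of $H$-edges would constitute a $3$-edge-cut of $G$ separating two circuits, contradicting cyclic $4$-connectivity. Carrying out this enumeration cleanly---over the possible incidence patterns of the three branch-paths meeting at a common branch vertex---is the delicate point of the argument.
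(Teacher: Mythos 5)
This lemma is quoted in the paper from Ding--Lewchalermvongs--Maharry \cite{G.Ding P}; the paper supplies no proof of its own, so your attempt can only be judged on its internal merits. The front end of your argument is sound: cubicity excludes a $K_{5}$-subdivision, Kuratowski then yields a $K_{3,3}$-subdivision $H$, the $H=G$ case correctly collapses to $G=K_{3,3}$, branch vertices of $H$ are saturated so every $H$-path has its feet in the interiors of branch paths, and the transitivity of $\mathrm{Aut}(K_{3,3})$ on ordered pairs of disjoint edges does show that any handle over non-adjacent edges produces $V_{8}$, so that a single $H$-path joining two non-adjacent branch paths finishes the proof. The gap is entirely in producing that path, and both of your case arguments fail as stated.

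For the single-branch-path case, rerouting the branch path $Q$ through an $H$-path $P$ with feet $u,v\in \mathrm{int}(Q)$ replaces the interior of $Q[u,v]$ by the interior of $P$; this gives a subdivision with \emph{strictly fewer} vertices only when $P$ is shorter than the segment it supplants, which minimality of $|V(H)|$ does not guarantee (e.g.\ $u,v$ consecutive on $Q$ and $P$ long). The correct tool here is cyclic $4$-connectivity, not minimality: if a whole bridge attaches only to $\mathrm{int}(Q)$, the two edges of $Q$ flanking its extreme attachments give a $2$-edge-cut (hence a cyclic $2$-separation in a cubic graph) with a circuit on each side. More seriously, the adjacent-branch-paths case is only asserted. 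A single bridge joining $Q_{1}$ and $Q_{2}$ at their common branch vertex does not by itself produce a $3$-edge-cut, because other bridges may attach across the proposed cut; and distinct bridges may join \emph{different} adjacent pairs at different branch vertices, chaining around $H$ so that no local claw isolates them. Ruling this out requires a genuinely global analysis of all bridges simultaneously (or an appeal to heavier machinery such as the Maharry--Robertson structure theorem for graphs with no topological $V_{8}$, or Lemma~\ref{lem2.3} together with an analysis of the Cube-generated branch), and this is precisely the content of the cited lemma. As it stands the proposal identifies the right target configuration but does not prove that it exists.
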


\begin{lem}[\cite{J.Maharry four}]\label{lem2.5}
If $H$ is topologically contained in $G$, then $L(H) \leq_{m} L(G)$.
\end{lem}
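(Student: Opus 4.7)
The plan is to factor the implication into two steps: first a trivial observation that taking subgraphs commutes nicely with the line graph operation, then a concrete minor-construction that ``unsubdivides'' a subdivision inside its line graph.

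For the first step, let $H' \subseteq G$ be a subgraph isomorphic to a subdivision of $H$. I would simply observe that if $G_1 \subseteq G_2$, then $L(G_1)$ is the subgraph of $L(G_2)$ induced by the vertex set $E(G_1) \subseteq E(G_2)$: two edges in $E(G_1)$ share an end in $G_1$ iff they share it in $G_2$. Since an induced subgraph is obtained by vertex-deletions, this already gives $L(H') \leq_m L(G)$, reducing the problem to showing $L(H) \leq_m L(H')$.

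The second and main step is the unsubdividing construction. Fix an isomorphism between $H'$ and a subdivision of $H$: for each $e=uv \in E(H)$, write the corresponding path in $H'$ as $P_e: u = x_0^e, x_1^e, \ldots, x_{k_e}^e = v$ with edges $e_1,\ldots,e_{k_e}$. In $L(H')$ the set $\{e_1,\ldots,e_{k_e}\}$ induces a path, because consecutive $e_i, e_{i+1}$ share the subdivision vertex $x_i^e$, while non-consecutive ones share no vertex (the $x_i^e$ are distinct). I would simultaneously contract each such path down to a single vertex $\tilde e$, obtaining a minor $M$ of $L(H')$, and claim $M \cong L(H)$ via $e \mapsto \tilde e$.

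To verify the isomorphism I would exploit the key structural fact that every internal subdivision vertex $x_i^e$ (with $1 \le i \le k_e-1$) has degree exactly $2$ in $H'$ and lies on a unique path $P_e$. This forces each interior edge $e_i$ ($1<i<k_e$) to have its only $L(H')$-neighbors among $\{e_{i-1},e_{i+1}\}$, so the interiors of distinct paths $P_e, P_f$ are completely ``insulated'' from one another in $L(H')$. Consequently $\tilde e \tilde f$ is an edge of $M$ iff some $e_i$ is adjacent in $L(H')$ to some $f_j$, iff $P_e$ and $P_f$ share an end-vertex, iff $e$ and $f$ share an endpoint in $H$, iff $\tilde e \tilde f \in E(L(H))$. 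Simplicity of $H$ (two distinct edges share at most one endpoint) rules out parallel edges, and the paths $P_e$ being simple rules out self-loops from the contractions.

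The main obstacle I anticipate is really just bookkeeping: making sure no adjacency in $L(H)$ is missed and none is created artificially. Both directions collapse onto the same observation that subdivision vertices of $H'$ have degree $2$ and appear on exactly one $P_e$, so once this is clearly isolated the verification is mechanical; there is no combinatorial difficulty beyond this.
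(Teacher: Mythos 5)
The paper does not prove this lemma at all; it is quoted from Maharry's octahedron paper \cite{J.Maharry four} and used as a black box, so there is no in-paper argument to compare against. Your proof is correct and complete: the reduction to $L(H')\leq_m L(G)$ via the induced-subgraph observation is sound, and the contraction of each path $\{e_1,\dots,e_{k_e}\}$ (these sets partition $V(L(H'))$ into connected pieces, so the simultaneous contraction is a legitimate minor operation) yields exactly $L(H)$, with the degree-$2$ insulation of internal subdivision vertices giving both directions of the adjacency check and simplicity of $H$ ruling out loops and parallel edges. This is the standard argument for the lemma, and nothing is missing.
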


The following are some results for 3-connected graphs.

\begin{lem}[\cite{G.Ding}]\label{lem2.6}
Let $H$ be a 3-connected graph. Then a graph is $H$-free if and only if it is constructed by repeatedly taking 0-, 1-, and 2-sums, starting from $\{K_{1}, K_{2}, K_{3}\}$ $\cup$ $\{$3-connected $H$-free graphs$\}$.
\end{lem}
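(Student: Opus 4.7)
The plan is to prove the two directions separately, each by induction. The sufficiency direction is routine once one isolates the key fact about small sums; the necessity direction uses the standard decomposition of a graph along its small separators into 3-connected pieces plus small residues, in the spirit of Tutte's decomposition for simple graphs.

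For sufficiency, I would induct on the number of sums used in the construction. Each base graph is $H$-free: the graphs $K_1, K_2, K_3$ trivially, since the 3-connected graph $H$ has at least four vertices; the 3-connected graphs in the starting set by hypothesis. For the inductive step it suffices to show that if $G = G_1 \oplus_k G_2$ with $k \leq 2$ and $G_1, G_2$ are both $H$-free, then $G$ is $H$-free. Assume for contradiction that $H \leq_m G$, and fix a minor model $\{B_u : u \in V(H)\}$ in $G$. Let $S = V(G_1) \cap V(G_2)$, so $|S| \leq 2$. If vertices of the model met both $V(G_1) \setminus S$ and $V(G_2) \setminus S$, then $S$ would yield a vertex cut of $H$ of size at most $2$, contradicting that $H$ is $3$-connected with $|V(H)| \geq 4$. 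So (after absorbing the at most two vertices of $S$ into nearby branch sets on the appropriate side) the model realizes $H$ entirely inside $G_1$ or entirely inside $G_2$, contradicting $H$-freeness there.

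For necessity, suppose $G$ is $H$-free. I would decompose $G$ by repeatedly reversing $0$-, $1$-, and $2$-sums: split off connected components ($0$-sums), split at each cut vertex ($1$-sums), and split at each $2$-vertex cut $\{u,v\}$ by adding the virtual edge $uv$ to both sides and taking a $2$-sum (deleting the extra edge at the end if $uv \notin E(G)$). This process terminates in pieces that admit no further such splits, hence are $3$-connected or have at most three vertices, i.e.\ lie in $\{K_1,K_2,K_3\}$. Each piece is a minor of $G$ — the virtual edges can be realized by contracting a path through the other side — so every $3$-connected piece is $H$-free and therefore lies in the starting family. Reading the decomposition backwards gives the desired construction.

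The main obstacle is the bookkeeping in the necessity direction, specifically ensuring that a $3$-connected component arising from the splitting along a $2$-separation $\{u,v\}$ with $uv \notin E(G)$ is still a minor of $G$ despite the virtual edge $uv$ being present in that component: one must verify that the other side contains a $u$-$v$ path whose contraction recovers the virtual edge. This is a small but standard argument, and once carried out the decomposition reduces cleanly to the result. Aside from this, the proof is a direct appeal to Tutte-style decomposition plus the $3$-connectedness of $H$.
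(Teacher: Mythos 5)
The paper does not prove this lemma at all: it is quoted verbatim from Ding and Liu \cite{G.Ding} as a known structural result, so there is no internal proof to compare against. Your argument is the standard one for results of this type and is essentially correct: the sufficiency direction rests on the fact that a $3$-connected minor cannot straddle a separation of order at most $2$, and the necessity direction is the Tutte-style decomposition along $0$-, $1$-, and $2$-cuts into $3$-connected torsos and pieces on at most three vertices. The only places deserving more care than you give them are (i) in sufficiency, the ``absorbing'' of the at most two branch sets meeting $S$ back into one side, which works precisely because the identified edge $uv$ is present in each summand of the $2$-sum (so a branch set containing both $u$ and $v$ but connected only through the other side can be reconnected, and an $H$-edge realized across the cut can be rerouted through $uv$); and (ii) in necessity, checking that the torsos produced are simple and that the pieces on at most three vertices really are among $K_1,K_2,K_3$, which is where the paper's convention that a $k$-sum requires both summands to have more than $k$ vertices is used. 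Neither point is a gap, just standard bookkeeping you correctly flag.
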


\begin{lem}[\cite{P.Seymour}]\label{lem2.7}
Suppose a 3-connected graph $H \neq W_{3}$ is a proper minor of a 3-connected graph $G \neq W_{n}$. Then G has a minor J, which is obtained from H by either adding an edge or 3-splitting a vertex.
\end{lem}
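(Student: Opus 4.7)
The plan is a minimality argument anchored on Tutte's Wheel Theorem. Let $\mathcal{F}$ be the family of all 3-connected minors $J$ of $G$ for which $H$ is a proper minor of $J$. Since $G\in\mathcal{F}$ this family is nonempty; pick $J\in\mathcal{F}$ minimizing $|V(J)|+|E(J)|$. The target reduces to showing that this minimum $J$ is itself obtained from $H$ by adding an edge or by a single 3-split.

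Assume first that $J$ is not a wheel. Tutte's Wheel Theorem supplies an edge $e\in E(J)$ such that $J\setminus e$ or $J/e$ is 3-connected. The crux of the argument is a minor-sensitive strengthening: I claim $e$ can be chosen so that the resulting 3-connected graph still contains $H$ as a minor. I would prove this by fixing a concrete $H$-model in $J$ (pairwise disjoint connected branch sets indexed by $V(H)$, plus a designated set of deleted edges) and arguing by contradiction: a hypothetical bad $e$ would yield a 3-separation of $J\setminus e$ or $J/e$ that destroys the model, and any such 3-separation either lies inside a single branch set (in which case the model can be locally rerouted to absorb $e$) or projects down to a separation of $H$ of order at most $3$, contradicting 3-connectedness of $H$. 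With such an $e$ in hand, the minimality of $J$ forces $J\setminus e = H$ or $J/e = H$. In the first case $J = H+e$, an edge addition. In the second, writing $e=xy$, the sets $A = N_J(x)\setminus\{y\}$ and $B = N_J(y)\setminus\{x\}$ realize $J$ as a 3-split of the vertex of $H$ obtained by contracting $xy$, and the size conditions $|A|,|B|\ge 2$ follow from 3-connectedness of $J$.

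It remains to exclude the wheel case $J = W_k$. Every 3-connected proper minor of $W_k$ is a smaller wheel, so the hypothesis $H\ne W_3$ forces $H = W_m$ for some $m\ge 4$, and minimality of $J$ collapses this to $k = m+1$. Now the hypothesis $G\ne W_n$ is the lever: because $G$ is 3-connected and contains $W_{m+1}$ as a minor but is not itself a wheel, Tutte's Wheel Theorem applied to $G$ together with a careful tracking of the $W_m$-minor through the resulting edge deletion or contraction produces a non-wheel 3-connected proper minor of $G$ that still properly contains $W_m$ and is strictly smaller than $W_{m+1}$, contradicting the choice of $J$. Thus $J$ is not a wheel, and the previous paragraph completes the proof.

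The main obstacle is the minor-sensitive refinement of Tutte's Wheel Theorem in the second paragraph — the guarantee that the wheel-edge can be chosen to preserve the full $H$-minor. This is in essence the graph content of Seymour's Splitter Theorem, and the 3-separation bookkeeping against a fixed $H$-model is where the real technical work lies; the exclusions $H\ne W_3$ and $G\ne W_n$ in the hypothesis are required exactly to shut down the wheel obstruction exposed in the third paragraph.
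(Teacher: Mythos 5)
This lemma is not proved in the paper at all; it is quoted from Seymour's 1980 decomposition paper and is, as you yourself note, the graph form of the Splitter Theorem. That observation is also the problem with your proposal: the entire content of the statement is concentrated in your second paragraph's ``minor-sensitive strengthening'' of Tutte's Wheel Theorem, namely that the edge $e$ with $J\setminus e$ or $J/e$ 3-connected can be chosen so that the $H$-minor survives. You assert this and gesture at ``3-separation bookkeeping against a fixed $H$-model,'' but the sketched dichotomy (a bad 3-separation either sits inside one branch set and can be rerouted, or projects to a $\le 3$-separation of $H$) is not how any proof of this goes and would not survive being written out. For a fixed edge $e$ there are two independent ways it can fail --- neither of $J\setminus e$, $J/e$ is 3-connected, or the one that is 3-connected has lost the $H$-model --- and for most individual edges one of these does happen; the theorem only asserts the \emph{existence} of one good edge, which is established by a global argument (Seymour's proof runs through connectivity of the ``bridges'' of an $H$-subdivision and occupies several pages; it is not a local perturbation of Tutte's theorem). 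So the core step is a restatement of the result to be proved, not a proof of it. The third paragraph has the same character: ``Tutte's Wheel Theorem applied to $G$ together with a careful tracking of the $W_m$-minor'' is exactly the unproved claim of paragraph two applied to $G$, so the wheel exclusion is not actually discharged either.

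Two smaller points. First, your reduction ``minimality of $J$ forces $J\setminus e=H$ or $J/e=H$'' is fine, but in the contraction case the sets $A=N_J(x)\setminus\{y\}$ and $B=N_J(y)\setminus\{x\}$ need not be disjoint (a vertex may be adjacent to both ends of $e$), whereas the paper's definition of a 3-split requires $A\cap B=\emptyset$; so even granting the main claim, identifying $J$ as a single 3-split of $H$ needs an extra word about common neighbours of $x$ and $y$. Second, since the paper offers no proof of its own --- Lemma~\ref{lem2.7} is imported wholesale from \cite{P.Seymour} --- the honest options here are either to cite the Splitter Theorem as a black box (which is what the paper does) or to actually prove it; your write-up does neither, and the admission in your final paragraph that the refinement ``is in essence the graph content of Seymour's Splitter Theorem'' is an accurate diagnosis of the circularity.
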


\section{Proof of main results}

\subsection{4-connected $Oct_{1}^{+}$-free graphs}

In this section, we characterize 4-connected $Oct_{1}^{+}$-free graphs and prove Theorem~\ref{thm1.4}.

\begin{lem}\label{lem3.1}
If a 4-connected graph $G \in \mathcal{C} \cup \mathcal{L}$ and $G$ is $Oct_{1}^{+}$-free, then $G$ is $C_{6}^{2}$ or $C_{2k+1}^{2}$ $(k\geq 2)$.
\end{lem}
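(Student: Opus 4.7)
My plan is to split on whether $G\in\mathcal{C}$ or $G\in\mathcal{L}$, and within the $\mathcal{C}$ case to distinguish small, even, and odd values of $n$.

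For $G=C_n^2\in\mathcal{C}$, the cases $n\in\{5,6\}$ are immediate, since $|V(C_n^2)|\leq 6<7=|V(Oct_1^+)|$ forces $Oct_1^+$-freeness, and both $C_5^2=C_{2\cdot 2+1}^2$ and $C_6^2$ appear in the target list. For even $n=2k\geq 8$ I would exhibit $Oct_1^+\leq_m C_n^2$ by an explicit construction. For $n=8$, contracting the distance-$2$ edge $\{0,2\}$ in $\mathbb{Z}_8$ and then deleting the resulting edge from the contracted vertex to $3$ produces a $7$-vertex, $13$-edge graph with degree sequence $(3,3,4,4,4,4,4)$ and two adjacent degree-$3$ vertices, which matches the edge-partition $3$-split of the octahedron and hence is isomorphic to $Oct_1^+$. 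For larger even $n$ I would adapt this idea by contracting further arcs of the cycle. For odd $n=2k+1\geq 7$ I must show $C_n^2$ is $Oct_1^+$-free; for $n=7$ this is a direct check, since any $Oct_1^+$-minor of $C_7^2$ must be a spanning subgraph of the form $C_7^2-e$, and in each of the two edge-orbits under the dihedral group the two degree-$3$ vertices of the resulting graph are non-adjacent, contradicting the adjacency of the two degree-$3$ vertices of $Oct_1^+$ (joined by the split edge).

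For $G=L(H)\in\mathcal{L}$ with $H$ a cubic cyclically $4$-connected graph, I would show that $L(H)$ always contains $Oct_1^+$ as a minor. By Lemma~\ref{lem2.3}, $H$ is obtained from $K_{3,3}$ or $Cube$ by repeatedly adding handles; by Lemma~\ref{lem2.5}, adding a handle to $H$ to produce $H'$ gives $L(H)\leq_m L(H')$. So minor containment of $Oct_1^+$ propagates through the handle construction, and it suffices to verify the two base cases: $L(K_{3,3})$ and $L(Cube)$ each contain $Oct_1^+$ as a minor, which I would do by explicit branch-set constructions.

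The main obstacle is the $Oct_1^+$-freeness of $C_{2k+1}^2$ for $k\geq 3$. Unlike the $Oct^+$ situation in Theorem~\ref{thm1.3}, no off-the-shelf invariant (planarity, girth, etc.) separates $Oct_1^+$ from odd-cycle squares, so the proof must exploit the specific asymmetry of $Oct_1^+$ --- two adjacent degree-$3$ vertices whose remaining neighborhoods are disjoint yet cross-adjacent via the four remaining edges of the octahedron --- against the rotational symmetry of $C_{2k+1}^2$. The natural approach is to analyze the induced partition of $\mathbb{Z}_n$ into the seven branch sets of a putative $Oct_1^+$-minor and derive a parity contradiction from the constraints imposed by the two distance-classes of edges in $C_n^2$.
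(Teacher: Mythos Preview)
Your treatment of $\mathcal{L}$ matches the paper exactly, and your handling of even $n$ is equivalent to the paper's (the paper reduces to $n=8$ via the observation that $C_{2k+2}^2\geq_m C_{2k}^2$, which is cleaner than building separate constructions for each even $n$, but either works).

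The gap is in the odd case, which you flag as ``the main obstacle'' and for which you sketch only a heuristic parity strategy. You are missing a one-line argument that dissolves the difficulty entirely: since $Oct_1^+$ is obtained from $Oct$ by $3$-splitting a vertex, contracting the split edge shows $Oct\leq_m Oct_1^+$; hence every $Oct$-free graph is automatically $Oct_1^+$-free. Theorem~\ref{thm1.2} lists $C_{2n-1}^2$ among the building blocks of $Oct$-free graphs, so $C_{2k+1}^2$ is $Oct$-free for all $k\geq 2$, and you are done. No analysis of branch sets or parity in $\mathbb{Z}_n$ is needed. Your proposed approach is not wrong in principle, but you have not carried it out, and the structural shortcut via $Oct$-freeness is both the paper's method and the right one to look for whenever the forbidden minor is obtained from a smaller one by splitting.
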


\begin{proof}
Suppose $G=L(H)$, where $H$ is a cubic cyclically 4-connected graph. By Lemma~\ref{lem2.3}, $H$ can be generated by repeatedly adding handles starting from $K_{3,3}$ or the $Cube$. Thus, $L(H) \geq_{m} L(K_{3,3})$ or $L(H) \geq_{m} L(Cube)$ by lemma~\ref{lem2.5}. Since $L(K_{3,3})$ and $L(Cube)$ both contain $Oct_{1}^{+}$ as a minor (as shown in Figure 4 and Figure 5), $G$ contains $Oct_{1}^{+}$-minor too.

\input{Figure4.Tpx}

\input{Figure5.Tpx}

Thus we assume that $G \in \mathcal{C}$. Suppose $G=C_{2k+1}^{2}$ $(k \geq 2)$, then $G$ is $Oct_{1}^{+}$-free since $C_{2k+1}^{2}$ $(k\geq 2)$ is $Oct$-free. For $C_{2k}^{2}$ $(k \geq 3)$, $C_{2k+2}^{2}$ contains $C_{2k}^{2}$ as a minor. Clearly, $C_{6}^{2}$ is $Oct_{1}^{+}$-free since it only has six vertices. And it is easy to verify that $C_{8}^{2}$ contains $Oct_{1}^{+}$-minor, thus all $C_{2k}^{2}$ contains $Oct_{1}^{+}$-minor for $k \geq 4$.
\end{proof}

\begin{thm}\label{thm3.2}
A 4-connected planar graph is $Oct_{1}^{+}$-free if and only if it is $C_{6}^{2}$.
\end{thm}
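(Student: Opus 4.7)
The backward direction is trivial since $|V(C_6^2)|=6<7=|V(Oct_1^+)|$. For the forward direction, let $G$ be 4-connected, planar, and $Oct_1^+$-free. The plan is to combine Lemma~\ref{lem3.1} with the chain theorem (Theorem~\ref{thm2.2}) and a short analysis of the 4-splits of $C_6^2$.

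If $G\in\mathcal{C}\cup\mathcal{L}$, Lemma~\ref{lem3.1} leaves only $G=C_6^2$ or $G=C_{2k+1}^2$ for some $k\ge 2$; the second family is non-planar (already $C_5^2=K_5$), so this branch forces $G=C_6^2$. Now assume $G\notin\mathcal{C}\cup\mathcal{L}$, so in particular $G\neq C_6^2$. Apply Theorem~\ref{thm2.2} to obtain a $(G,C_6^2)$-chain $G=G_0,G_1,\dots,G_n=C_6^2$ of 4-connected graphs with $G_i/e_i=G_{i+1}$ and $n\ge 1$. I would focus on the penultimate term $G_{n-1}$: it is 4-connected, it is planar (as a minor of $G$), and the identity $G_{n-1}/e_{n-1}=C_6^2$ exhibits $G_{n-1}$ as a single 4-split of $C_6^2$ at some vertex $v$. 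Since $G_{n-1}\le_m G$, it suffices to prove $Oct_1^+\le_m G_{n-1}$ to derive a contradiction.

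For this, the key observation is $C_6^2=K_{2,2,2}=Oct$, so every vertex has degree~$4$ and $N(v)$ induces a 4-cycle $a_1a_2a_3a_4$. A 4-split at $v$ is parameterized by $A,B\subseteq N(v)$ with $A\cup B=N(v)$ and $|A|,|B|\ge 3$; the resulting graph has $|E|=12-4+1+|A|+|B|=9+|A|+|B|$ edges on $7$ vertices, and the planar bound $|E|\le 3\cdot 7-6=15$ forces $|A|+|B|=6$, so $|A|=|B|=3$ and $|A\cap B|=2$. The octahedral automorphisms acting on the 4-cycle $a_1a_2a_3a_4$ then leave only two isomorphism classes of such splits, according as the common pair $A\cap B$ is an edge or a diagonal of that 4-cycle.

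The main obstacle, and the step I would carry out most carefully, is to exhibit $Oct_1^+$ explicitly as a minor in each of these two cases. In the ``edge'' case I would take $A=\{a_1,a_2,a_3\}$, $B=\{a_1,a_2,a_4\}$ and delete the edges $v'a_1$ and $v''a_2$; in the ``diagonal'' case I would take $A=\{a_1,a_2,a_3\}$, $B=\{a_1,a_3,a_4\}$ (so that $A\cap B=\{a_1,a_3\}$) and delete $v'a_1$ and $v''a_3$. In either case, $v'$ retains exactly two neighbors in $N(v)$ and so does $v''$, and those two pairs are two disjoint edges of the 4-cycle $a_1a_2a_3a_4$. The graph so obtained is therefore the ``edge-edge'' 3-split of $Oct$, which is the planar graph $Oct_1^+$; this yields $Oct_1^+\le_m G_{n-1}\le_m G$, contradicting the $Oct_1^+$-freeness of $G$ and completing the proof.
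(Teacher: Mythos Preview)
Your proof is correct and follows essentially the same approach as the paper: reduce via Lemma~\ref{lem3.1} and Theorem~\ref{thm2.2} to a single 4-split of $C_6^2=Oct$, and then check case-by-case that every such split contains $Oct_1^+$. Your execution is in fact a bit cleaner than the paper's---you use the planar edge bound $|E|\le 3\cdot 7-6$ to force $|A|=|B|=3$ (the paper instead treats this as the ``minimal'' case and argues the others dominate it), you reduce to two isomorphism types via the $D_4$-action on $N(v)$ (the paper lists three choices of $B$), and you exhibit $Oct_1^+$ explicitly as a spanning subgraph rather than merely asserting containment.
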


\begin{proof}
The sufficiency clearly holds. To prove the necessity, assume that $G$ is a 4-connected planar $Oct_{1}^{+}$-free graph. If $G \in \mathcal{C} \cup \mathcal{L}$, by Lemma~\ref{lem3.1} $G=C_{6}^{2}$. If $G$ is not in $\mathcal{C} \cup \mathcal{L}$, by Theorem~\ref{thm2.2} there exists a $(G, C_{6}^{2})$-chain.

Let $\{v_{1}, v_{2},..., v_{6}\}$ be vertices of $C_{6}^{2}$ (shown in Figure 6). By symmetry, we 4-split $v_{1}$ and first consider the minimal case $|A|=|B|=3$, where $A$, $B$ belongs to $N(v_{1})$, $A \cup B =N(v_{1})$. Suppose $v_{1}'$, $v_{1}''$ are two new vertices obtained by 4-splitting $v_{1}$, and $N(v_{1}')=A \cup \{v_{1}''\}$, $N(v_{1}'')=B \cup \{v_{1}'\}$. Since the four neighbors of $v_{1}$ in $C_{6}^{2}$ construct a 4-cycle, three vertices in $A$ are pairwise adjacent. By symmetry, we assume that $A = \{v_{2}, v_{5}, v_{6}\}$, then $v_{3}$ must be adjacent to $v_{1}''$. Therefore, $B$ must be one of the following sets: $\{v_{3}, v_{5}, v_{6}\}$, $\{v_{2}, v_{3}, v_{5}\}$, $\{v_{2}, v_{3}, v_{6}\}$. In all cases, the new graph $G'$ generated by 4-splitting $v_{1}$ from $C_{6}^{2}$ with $|A|=|B|=3$ contains $Oct_{1}^{+}$. Clearly, all other 4-splits of $C_{6}^{2}$ contain $G'$ as a minor, thus contain $Oct_{1}^{+}$. Hence, $G$ is $C_{6}^{2}$.
\end{proof}

\input{Figure6.Tpx}

\noindent\emph{Proof of Theorem~\ref{thm1.4}}

The result follows from Theorem~\ref{thm2.2}, Lemma~\ref{lem3.1} and Theorem~\ref{thm3.2}.
$\hfill\qedsymbol$

\subsection{Planar $Oct_{1}^{+}$-free graphs}

In this section, we characterize all planar $Oct_{1}^{+}$-free graphs and prove Theorem~\ref{thm1.5}. We first establish some lemmas.

\begin{lem}\label{lem3.3}
If $G_{1}$, $G_{2}$ are $k$-connected for $k=0,1,2,3$ and at least one of them is non-planar, then the $k$-sum of $G_{1}$ and $G_{2}$ is non-planar.
\end{lem}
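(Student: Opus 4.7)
Without loss of generality assume $G_1$ is non-planar. My plan is to dispose of $k \in \{0,1,2\}$ by exhibiting $G_1$ as a subgraph or minor of the $k$-sum, and to handle $k = 3$ by a direct planar-embedding argument.

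For $k = 0$, the $k$-sum is the disjoint union and $G_1$ is a component, so $G_1 \subseteq G$. For $k = 1$, identifying a single vertex leaves $G_1$ as a subgraph of $G$. For $k = 2$, let $uv$ be the identified edge. If $uv$ is retained in $G$, then again $G_1 \subseteq G$; otherwise the 2-connectivity of $G_2$ gives a $u$-$v$ path in $G_2 - uv$, and contracting its internal vertices in $G$ restores the edge $uv$, giving $G_1 \leq_{m} G$. In every case, non-planarity of $G_1$ transfers to $G$.

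For $k = 3$ the minor argument fails in general: if $G_2 = K_4$ and all three identified triangle edges are deleted, the unique interior vertex of $G_2$ cannot simultaneously recreate all three of them. So instead I would assume for contradiction that the 3-sum $G$ admits a planar embedding. Writing $\{u,v,w\}$ for the identified triangle and $D \subseteq \{uv, uw, vw\}$ for the deleted edges, the embedding of $G$ restricts to a planar embedding of the connected subgraph $G_1 - D$. Pick any component $C$ of $G_2 - \{u,v,w\}$, nonempty since $G_2$ is 3-connected of order at least $4$. Being connected, $C$ sits inside a single face $F$ of $G_1 - D$. Moreover, 3-connectivity of $G_2$ forces every component of $G_2 - \{u,v,w\}$ to have a neighbor in each of $u$, $v$, and $w$, for otherwise the set of vertices in $\{u,v,w\}$ adjacent to that component would be a cut of size at most $2$ in $G_2$. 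Hence the edges from $C$ to $u, v, w$, drawn in the plane without crossings, force $u, v, w$ all onto the boundary of $F$. Since $F$ is an open disk, three non-crossing chords can be drawn inside it to realize the edges of $D$, extending the embedding to a planar drawing of $G_1$ and contradicting its non-planarity.

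The main obstacle is this final case. The key extra input is that 3-connectivity of $G_2$ prevents any component of $G_2 - \{u,v,w\}$ from attaching to a proper subset of $\{u,v,w\}$; this is what places all three of $u, v, w$ on a common face boundary of $G_1 - D$ and lets the deleted triangle edges be reinserted inside that face.
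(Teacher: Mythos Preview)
Your argument is correct. For $k \le 2$ you and the paper proceed identically. For $k=3$ the approaches diverge: the paper invokes Kuratowski's theorem on $G_1$, splits into cases according to whether the forbidden subdivision $\Gamma$ is of $K_{3,3}$ or of $K_5$, and in each case reconstructs a Kuratowski subgraph inside $G$ by rerouting the deleted triangle edges through $G_2$ (in the $K_5$ case this is done by observing that the 3-sum of $K_5$ with $K_4$ is already non-planar, shown in a figure, and that $G$ contains it as a minor). Your route is purely topological: assuming a planar embedding of $G$, a component of $G_2-\{u,v,w\}$ certifies that $u,v,w$ all lie on the boundary of a single face of $G_1-D$, and the deleted triangle edges are then redrawn inside that face, contradicting the non-planarity of $G_1$. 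Your approach avoids the $K_{3,3}$/$K_5$ case analysis and any appeal to Kuratowski, which makes it conceptually cleaner; the paper's approach is more explicitly combinatorial but leans on a picture for the $K_5$ case. One small point worth making precise in your write-up: the claim that each face $F$ of $G_1-D$ is an open disk uses that $G_1-D$ is connected, which holds because the edge set of a triangle can never be an edge-cut in a $3$-edge-connected graph. Alternatively, you can bypass the disk claim by contracting $C$ to a single new vertex $c$ adjacent to $u,v,w$, noting that $(G_1-D)+c$ is then a planar minor of $G$, and that the three edges $cu,cv,cw$ split the relevant face into three regions inside which $uv,vw,uw$ can be drawn.
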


\begin{proof}
It clearly holds for $k=0,1$. Without loss of generality, we suppose $G_{1}$ is non-planar, $G_{2}$ is planar. Then $G_{1}$ contains a subdivision of $K_{3,3}$ or $K_{5}$, we denote it by $\Gamma$. When $k=2$, let $G$ be the 2-sum of $G_{1}$ and $G_{2}$, let $e=uv$ be the common edge. Suppose $G$ is planar, then $e$ is contained in the subdivision $\Gamma$ and $e$ is deleted after identification. Since $G_{2}$ is 2-connected, there exists an ($u$,$v$)-path $P$ different from $e$. Then $G_{1} \cup P$ contains a subdivision of $K_{3,3}$ or $K_{5}$, a contradiction.

When $k=3$, let $G$ be the graph obtained by 3-summing $G_{1}$ and $G_{2}$ over a common triangle $v_{1}v_{2}v_{3}v_{1}$. Suppose $G$ is planar and $v_{1}v_{2},v_{2}v_{3},v_{1}v_{3}$ are all deleted after identification. Since $G_{1}$ is non-planar, some edges in $\{v_{1}v_{2},v_{2}v_{3},v_{1}v_{3}\}$ are contained in the subdivision $\Gamma$. If $\Gamma$ is a subdivision of $K_{3,3}$, since there is no triangle in $\Gamma$, at most two edges of $\Gamma$ are contained in $\{v_{1}v_{2},v_{2}v_{3},v_{1}v_{3}\}$, say $v_{1}v_{2},v_{2}v_{3}$. Since $G_{2}$ is 3-connected, there exists a vertex $v$ different from $v_{1},v_{2},v_{3}$ and three internally-disjoint $(v,v_{1})$-path, $(v,v_{2})$-path, $(v,v_{3})$-path in $G_{2}$. By contracting $(v,v_{2})$-path to $v_{2}$, we can obtain a $(v_{1},v_{2})$-path $P_{1}$ and a $(v_{2},v_{3})$-path $P_{2}$. Then $\Gamma \setminus v_{1}v_{2} \setminus v_{2}v_{3} \cup P_{1} \cup P_{2}$ forms a subdivision of $K_{3,3}$ again. This contradicts to the planarity of $G$. Thus we assume that $\Gamma$ is a subdivision of $K_{5}$. As shown in Figure 7, the 3-sum $G'$ of $K_{5}$ and $K_{4}$ is non-planar. Since every 3-connected graph contains $W_{3} = K_{4}$ as a minor, $G$ must contain $G'$ as a minor. A contradiction.
\end{proof}

\input{Figure7.Tpx}

It is sufficient to consider the $k$-sums $(k=0,1,2,3)$ of planar graphs to characterize all planar $Oct_{1}^{+}$-free graphs.

\begin{lem}\label{lem3.4}
If $G_{1}$, $G_{2}$ are both planar, then the $k$-sum $G$ $(k=0,1,2)$ of $G_{1}$ and $G_{2}$ is planar.
\end{lem}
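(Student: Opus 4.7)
The plan is to handle the three cases $k=0,1,2$ separately by producing explicit plane embeddings of $G$ from plane embeddings of $G_1$ and $G_2$, using the standard fact that in any plane embedding of a planar graph, any face can be chosen to be the outer (unbounded) face via stereographic projection on the sphere.

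For $k=0$, the 0-sum is the disjoint union of $G_1$ and $G_2$, so I would simply embed $G_1$ in one bounded region of the plane and $G_2$ in a disjoint bounded region; this gives a plane embedding of $G$.

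For $k=1$, let $v$ be the vertex used for identification. Every vertex of a planar graph lies on the boundary of at least one face, so I may fix a plane embedding of $G_1$ in which $v$ lies on the boundary of the outer face, and likewise a plane embedding of $G_2$ with $v$ on the boundary of its outer face. Now I would scale and translate the embedding of $G_2$ so that it lies entirely inside a small disk placed in the outer face of $G_1$ and tangent to the point representing $v$, with its own copy of $v$ identified with the one in $G_1$. This yields a plane embedding of the 1-sum.

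For $k=2$, let $e=uv$ be the common edge. Since every edge of a plane graph lies on the boundary of exactly two faces, I may fix plane embeddings of $G_1$ and $G_2$ in each of which $e$ appears on the boundary of the outer face (reselecting the outer face if necessary). I would then embed $G_2$ inside the outer face of the embedding of $G_1$ so that the two copies of the segment $e$ coincide; the remainder of $G_2$ fits in the outer face without crossing. If the instance of the $2$-sum being considered deletes $e$ after identification, this is done simply by erasing that edge from the drawing, which preserves planarity. The main technical point throughout is the reselection of the outer face, which is routine for plane graphs; once that is in place, the constructions are elementary, and no obstacle analogous to the $k=3$ case of Lemma~\ref{lem3.3} arises because $G_2$ fits cleanly into a single face of the chosen embedding of $G_1$.
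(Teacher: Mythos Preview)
Your argument is correct and follows essentially the same approach as the paper: the paper dismisses $k=0,1$ as clear and, for $k=2$, chooses plane embeddings $H_i$ of $G_i$ with $e$ on the outer face and embeds $H_2$ inside the outer face of $H_1$, exactly as you do. Your write-up is simply more detailed in the $k=0,1$ cases and in spelling out the reselection of the outer face.
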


\begin{proof}
It clearly holds for $k=0,1$. When $k=2$, let $e$ be the common edge. Since $G_{i}$ $(i=1,2)$ is planar, there exists a planar embedding $H_{i}$ of $G_{i}$ such that the outer face $\widetilde{f_{i}}$ of $H_{i}$ is incident with $e$. Thus a planar embedding of $G$ can be obtained by embedding $H_{2}$ in $\widetilde{f_{1}}$ of $H_{1}$.
\end{proof}

Recall that the 3-sum of $G_{1}$, $G_{2}$ is obtained by identifying one triangle of $G_{1}$ with one triangle of $G_{2}$, and some of the three common edges could be deleted after identification. Next, we define the $special$ $3$-$sum$ of two graphs. A triangle $abca$ of $G$ is called a $separating$ $triangle$ if the graph obtained from $G$ by deleting vertices $a$, $b$, $c$ is disconnected. Otherwise, we call the triangle $abca$ $non$-$separating$ $triangle$. The $special$ $3$-$sum$ of $G_{1}$ and $G_{2}$ is obtained by taking 3-sum of them over a non-separating triangle of both $G_{1}$ and $G_{2}$.

\begin{lem}\label{lem3.5}
Let $G_{1}$, $G_{2}$ be two 3-connected planar graphs with triangles and let $G$ be a 3-sum of them. If $G$ is obtained by taking special 3-sum of them, then $G$ is planar. Otherwise, $G$ is non-planar.
\end{lem}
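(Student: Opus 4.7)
The plan is to treat the two directions of the biconditional separately. For the forward direction (special 3-sum yields a planar graph), I would show that the 3-sum can be performed by gluing planar embeddings of $G_1$ and $G_2$ along a shared triangular face. The key tool is Tutte's face characterization for 3-connected planar graphs: in any planar embedding, the face boundaries are exactly the non-separating induced cycles. Hence each non-separating triangle $T_i$ bounds a face $F_i$ in the essentially unique planar embedding of $G_i$. I would fix an embedding of $G_1$ in which $T_1$ bounds an inner face, take an embedding of $G_2$ in which $T_2$ bounds the outer face (reflecting if necessary so that the cyclic orders around $V(T_1)$ and $V(T_2)$ agree after identification), and place $G_2 - V(T_2)$ inside $F_1$. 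This produces a planar embedding of the 3-sum that keeps all three triangle edges; deleting any subset of those edges only preserves planarity.

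For the reverse direction (non-special 3-sum is non-planar), assume without loss of generality that $T_1 = v_1v_2v_3$ is a separating triangle of $G_1$. My strategy is to exhibit a $K_{3,3}$ minor in $G$, from which non-planarity follows by Kuratowski's theorem. Since $G_1$ is 3-connected and $V(T_1)$ is a vertex cut, each $v_i$ must have a neighbor in every component of $G_1 - V(T_1)$; otherwise the remaining two vertices of $T_1$ would form a 2-cut of $G_1$, contradicting 3-connectivity. Let $C_1, C_2$ be two distinct components of $G_1 - V(T_1)$. By the same 3-connectivity argument applied to $G_2$, any component $D$ of $G_2 - V(T_2)$ (which exists because $G_2$ has more than three vertices) has each $v_i$ as a neighbor. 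Because the only edges that may be deleted in forming the 3-sum lie within $V(T)$, all edges from each $v_i$ into $C_1 \cup C_2 \cup D$ are preserved in $G$. Contracting the connected subgraphs $C_1, C_2, D$ to single vertices $c_1, c_2, c_3$ produces a minor in which every $c_j$ is adjacent to every $v_i$, yielding $K_{3,3}$ on the bipartition $\{v_1,v_2,v_3\} \cup \{c_1,c_2,c_3\}$.

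The main obstacle I anticipate is the topological gluing step in the forward direction: one must justify that the cyclic order of $V(T_1)$ along the boundary of $F_1$ can be made to match the cyclic order of $V(T_2)$ along the boundary of $F_2$ by a suitable reflection of $G_2$'s embedding. This relies on Whitney's theorem that a 3-connected planar graph has a planar embedding unique up to reflection, combined with Tutte's face characterization to locate the triangles as faces in the first place. Once this topological step is handled, the reverse direction is a clean application of the $K_{3,3}$-minor construction sketched above, and the overall proof assembles into the two implications of the lemma.
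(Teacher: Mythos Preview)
Your proposal is correct and follows essentially the same approach as the paper: for the forward direction you glue planar embeddings along the triangular face bounded by the non-separating triangle, and for the reverse direction you exhibit a $K_{3,3}$ structure using two pieces on either side of the separating triangle in $G_1$ together with one piece from $G_2$. Your version is somewhat more explicit (invoking Tutte's face characterization and Whitney uniqueness for the gluing, and contracting components to a $K_{3,3}$ minor rather than the paper's fan construction of a $K_{3,3}$ subdivision), but the underlying argument is the same.
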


\begin{proof}
Suppose $G$ is obtained by 3-summing $G_{1}$, $G_{2}$ over an non-separating triangle $C_{1}$ of both $G_{1}$ and $G_{2}$. Let $f$ be the face of $G_{1}$ that bounded by $C_{1}$. Then there exists a planar embedding $H$ of $G_{1}$ such that the outer face $\tilde{f}$ of $H$ has the same boundary as $f$. Thus a planar embedding of $G$ can be obtained by embedding $G_{2}$ in $\tilde{f}$.

Next, we suppose that $G$ is obtained by 3-summing $G_{1}$ and $G_{2}$ over a separating triangle $C_{2}=abca$ of $G_{1}$ or $G_{2}$, say $G_{1}$. Since $G_{1}$ is planar, there exist two vertices $u_{1}$, $u_{2}$ such that $u_{1}$ is in int$C_{2}$ and $u_{2}$ is in ext$C_{2}$. Let $u_{3}$ be a vertex of $G_{2}$ that different from $\{a,b,c\}$. Since both $G_{1}$ and $G_{2}$ are 3-connected, there exists a $\{u_{i},a\}$-path $P_{i1}$, a $\{u_{i},b\}$-path $P_{i2}$ and a $\{u_{i},c\}$-path $P_{i3}$ in $G$ such that $P_{i1}$, $P_{i2}$ and $P_{i3}$ are internally-disjoint (Shown in Figure 8). It can be seen that $P_{11} \cup P_{12} \cup P_{13} \cup P_{21} \cup P_{22} \cup P_{23} \cup P_{31} \cup P_{32} \cup P_{33}$ forms a subdivision of $K_{3,3}$. Thus, $G$ is non-planar.
\end{proof}

\input{Figure8.Tpx}

We next prove Theorem~\ref{thm1.5}.

\noindent\emph{Proof of Theorem~\ref{thm1.5}}

We first characterize all 3-connected planar $Oct_{1}^{+}$-free graphs. Suppose a graph $G$ is 3-connected planar $Oct_{1}^{+}$-free graph. Two cases now arise, depending on whether G has an Oct-minor.

{\bf Case 1.} $G$ contains an $Oct$-minor.

It is clearly that $G=Oct$ is 3-connected planar $Oct_{1}^{+}$-free graph. Thus we assume that $G \neq Oct$. By Lemma~\ref{lem2.7}, $G$ has a minor $J$, which is obtained from $Oct$ by either adding an edge or 3-splitting a vertex. Adding any edge to $Oct$ results in a non-planar graph. And there are two graphs obtained by 3-splitting a vertex of $Oct$, one is $Oct_{1}^{+}$, another is non-planar. Hence, in this case, $Oct$ is the only 3-connected planar $Oct_{1}^{+}$-free graph.

{\bf Case 2.} $G$ is $Oct$-free.

Since $G$ is 3-connected, $G$ is constructed by taking 3-sums starting from graphs in $\{K_{4}\} \cup \{C_{2n-1}^{2} : n\geq 3\} \cup \{L_{4}^{'}, L_{5}, L_{5}^{'}, L_{5}^{''}, P_{10}\}$ by Theorem~\ref{thm1.2}.
By Lemma~\ref{lem3.3}, Lemma~\ref{lem3.5} and the planarity of $G$, $G$ is $L_{5}$ or $G$ is the special 3-sums of $K_{4}$. Thus $G$ belongs to $\{L_{5}\} \cup \mathscr{K}$ in this case.

It follows from Case 1 and Case 2 that $G$ belongs to $\{Oct,L_{5}\} \cup \mathscr{K}$. Then Theorem~\ref{thm1.5} follows from Lemma~\ref{lem2.6}, Lemma~\ref{lem3.3} and Lemma~\ref{lem3.4}.
$\hfill\qedsymbol$

\subsection{4-connected $Oct_{2}^{+}$-free graphs}

In this section, we characterize 4-connected $Oct_{2}^{+}$-free graphs and prove Theorem~\ref{thm1.6}.

\begin{lem}\label{lem3.6}
Graphs in $\mathcal{C}$ are all 4-connected $Oct_{2}^{+}$-free graphs.
\end{lem}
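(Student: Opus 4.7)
My plan is to verify two things for each $C_n^2$ with $n \geq 5$: that it is 4-connected and that it contains no $Oct_2^+$-minor. Four-connectivity is routine, since $C_n^2$ is 4-regular and no three-vertex set can separate it owing to the cyclic symmetry, so I would dispatch this in a line or two.

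The substantive part is $Oct_2^+$-freeness, which I would split according to the value of $n$. When $n \in \{5, 6\}$ the graph $C_n^2$ has fewer than seven vertices and thus cannot contain the seven-vertex $Oct_2^+$ as a minor. When $n = 2k+1 \geq 7$ is odd, Theorem~\ref{thm1.2} tells us $C_n^2$ is $Oct$-free; since contracting the $3$-split edge $v_1'v_1''$ of $Oct_2^+$ recovers $Oct$, we have $Oct \leq_m Oct_2^+$, so $Oct$-freeness of $C_n^2$ forces $Oct_2^+$-freeness. This closes the odd case uniformly.

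The remaining case is even $n = 2k \geq 8$, where I would argue directly from the branch-set definition of minor. An $Oct_2^+$-minor in $C_n^2$ corresponds to seven pairwise vertex-disjoint connected branch sets realising the $13$ edges of $Oct_2^+$. In $Oct_2^+$ the two degree-$3$ vertices $v_1', v_1''$ are joined by an edge and each is attached to one diagonal of the central $4$-cycle $v_2v_3v_5v_6$, whose chords $v_2v_5$ and $v_3v_6$ are absent. For the base case $n = 8$, I would run a short case analysis on whether one vertex of $C_8^2$ is omitted (which leaves only $12 < 13$ edges, already too few) or all eight vertices are used with exactly one size-$2$ branch set that is necessarily a single edge $e$; checking both orbits of $e$ (consecutive and skip-one), and allowing a single edge deletion from $C_8^2/e$ when needed, one sees that the only $13$-edge graph that ever appears is $Oct_1^+$, not $Oct_2^+$. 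For larger even $n$ I would lift this by arguing that $C_n^2$ possesses no vertex and no small contracted super-vertex whose four neighbours form a chord-free $4$-cycle with both diagonals absent, which is precisely the structural feature required to play the role of $v_4$.

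The main obstacle I expect is exactly this last structural argument: while $n = 8$ is a finite check, giving a uniform proof that no branch-set configuration in $C_n^2$ can realise the required antipodal $4$-cycle around $v_4$ for all even $n \geq 8$ is where the real technical work concentrates.
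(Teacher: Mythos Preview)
Your handling of the odd case matches the paper exactly: $C_{2k+1}^2$ is $Oct$-free by Theorem~\ref{thm1.2}, and since $Oct \leq_m Oct_2^+$ this forces $Oct_2^+$-freeness. The small cases $n=5,6$ are fine too.

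The gap is in the even case $n=2k\geq 8$. You propose a direct branch-set analysis, handle $n=8$ by hand, and then acknowledge that the uniform argument for $n\geq 10$ is ``where the real technical work concentrates'' and is not actually carried out. So as written the proposal is incomplete precisely there. The missing observation, which the paper uses, is that $C_{2k}^2$ is \emph{planar} for every $k\geq 3$: draw the $2k$-cycle, put the distance-two edges among odd-indexed vertices inside and those among even-indexed vertices outside. Since $Oct_2^+$ is by definition the non-planar graph obtained by $3$-splitting a vertex of $Oct$, no planar graph can contain it as a minor. This one-line planarity argument replaces your entire branch-set programme for even $n$ and removes the obstacle you flagged.

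In short, the paper's proof is just two clauses: $C_{2k}^2$ planar, $C_{2k+1}^2$ $Oct$-free. You reproduced the second clause but missed the first, and the substitute you offer for it is left unfinished.
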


\begin{proof}
It clearly holds since $C_{2k}^{2}$ $(k \geq 3)$ is planar and  $C_{2k+1}^{2}$ $(k \geq 2)$ is $Oct$-free.
\end{proof}

\begin{lem}\label{lem3.7}
If a 4-connected graph $G \in \mathcal{L}$ and $G$ is $Oct_{2}^{+}$-free, then $G$ is planar or $G = L(K_{3,3})$.
\end{lem}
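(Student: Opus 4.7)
The plan is to mirror the structure of the proof of Lemma~\ref{lem3.1}, splitting on whether the cubic cyclically 4-connected graph $H$ with $G = L(H)$ is planar or not.

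First I would handle the planar case. If $H$ is a planar cubic graph, then $L(H)$ is planar: given any plane embedding of $H$, at each vertex $v$ of $H$ the three incident edges have a cyclic order, and replacing $v$ by the triangle on its three incident edges in that cyclic order yields a plane embedding of $L(H)$. This settles the planar subcase with the conclusion that $G$ is planar.

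Next, assume $H$ is non-planar. By Lemma~\ref{lem2.4} either $H = K_{3,3}$, which immediately gives $G = L(K_{3,3})$ (matching the second alternative in the conclusion), or $H$ topologically contains $V_{8}$. In the latter case, Lemma~\ref{lem2.5} yields $L(V_{8}) \leq_m L(H) = G$, so it suffices to exhibit $Oct_{2}^{+}$ as a minor of $L(V_{8})$; this would contradict the hypothesis that $G$ is $Oct_{2}^{+}$-free and eliminate the $V_{8}$ subcase.

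The main obstacle is precisely this last verification: showing $Oct_{2}^{+} \leq_m L(V_{8})$ by an explicit minor extraction, just as Figures~4 and~5 were used in Lemma~\ref{lem3.1}. Here $L(V_{8})$ is a $4$-regular graph on $12$ vertices with $24$ edges, while $Oct_{2}^{+}$ has only $7$ vertices and $13$ edges, so there is plenty of room to contract and delete. I would label the twelve edges of $V_{8}$ by the four ``rim'' edges of each of the two $4$-cycles and the four ``rung'' diagonals, and in $L(V_{8})$ contract a carefully chosen matching on the rim-vertices to collapse each $4$-cycle down to a few vertices realizing a copy of $Oct$, then use one uncontracted rung-vertex to produce the extra vertex of $Oct_{2}^{+}$ with the required non-planar attachment; the symmetry of $V_{8}$ and the fact that $L(V_{8})$ already contains many $K_{4}$-subgraphs (one from each vertex of $V_{8}$) should make the required configuration easy to locate. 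Once this explicit minor is found, the lemma follows directly from the case analysis above.
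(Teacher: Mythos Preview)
Your proposal is essentially the same approach as the paper's: split on the planarity, invoke Lemma~\ref{lem2.4} in the non-planar case, and in the $V_{8}$ subcase pass to $L(V_{8})$ via Lemma~\ref{lem2.5} and exhibit an explicit $Oct_{2}^{+}$-minor (the paper does this with a figure, so your ``main obstacle'' is exactly what the paper leaves to a picture). Two small remarks. First, the paper splits on whether $G$ is planar rather than whether $H$ is planar, and then uses that $G$ non-planar forces $H$ non-planar; your version (splitting on $H$) is equivalent and arguably cleaner. The paper also spends a full case verifying that $L(K_{3,3})$ is $Oct_{2}^{+}$-free; you correctly omit this, since the lemma is only the forward implication and that check is not needed here (it is needed later for the ``if and only if'' of Theorem~\ref{thm1.6}). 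Second, your description of $V_{8}$ is off: $V_{8}$ is the M\"obius--Wagner graph, an $8$-cycle with four diameter chords, so it has eight rim edges and four rungs, not two $4$-cycles; keep this in mind when you actually build the minor in $L(V_{8})$.
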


\begin{proof}
Suppose $G = L(H)$, where $H$ is a cubic cyclically 4-connected graph. If $G$ is planar, then $G$ is $Oct_{2}^{+}$-free since $Oct_{2}^{+}$ is non-planar. When $G$ is non-planar, $H$ is non-planar too. By Lemma~\ref{lem2.4}, $H = K_{3,3}$ or $H$ contains a subdivision of $V_{8}$.

{\bf Case 1.} $H = K_{3,3}$.

As shown in Figure 9, $G = L(K_{3,3})$ has 9 vertices $\{v_{1}, v_{2},..., v_{9}\}$ and 18 edges. If $Oct_{2}^{+}$ is a minor of $G$, the minor can be obtained from $G$ by contracting two edges and then deleting some edges. Without loss of generality, we first contract $v_{4}v_{5}$ to $v_{5}$ and denote the resulting graph by $G_{816}$, means it has 8 vertices and 16 edges. By symmetry, we next contract one of the edges in $\{v_{1}v_{5}, v_{2}v_{5}, v_{3}v_{6}, v_{1}v_{2}, v_{2}v_{3}, v_{5}v_{6}, v_{1}v_{3}, v_{1}v_{7}, v_{3}v_{9}, v_{2}v_{8}\}$. We verify every case in order and up to isomorphism there are six resulting graphs, we denote them by $G_{713}^{a}$, $G_{714}^{a}$, $G_{713}^{b}$, $G_{714}^{b}$, $G_{715}$, $G_{714}^{c}$ respectively. Since they are all $Oct_{2}^{+}$-free graphs, $G = L(K_{3,3})$ is $Oct_{2}^{+}$-free too.

\input{Figure9.Tpx}
\input{Figure10.Tpx}

{\bf Case 2.} $H$ contains a subdivision of $V_{8}$.

By Lemma~\ref{lem2.5}, $G$ contains $L(V_{8})$ as a minor. Since $L(V_{8})$ contains a $Oct_{2}^{+}$-minor as shown in Figure 10, $G$ contains $Oct_{2}^{+}$ as a minor.
\end{proof}

\noindent\emph{Proof of Theorem~\ref{thm1.6}}

Suppose $G$ is a 4-connected graph that is not in $\mathcal{C} \cup \mathcal{L}$. If $G$ is planar, then $G$ is $Oct_{2}^{+}$-free clearly. If $G$ is non-planar, by Theorem~\ref{thm2.2}, there is a $(G, K_{5})$-chain. That is $G$ can be generated by repeatedly splitting vertices of $C_{5}^{2}$.

Then Theorem~\ref{thm1.6} follows from Lemma~\ref{lem3.6} and Lemma~\ref{lem3.7}.
$\hfill\qedsymbol$

\end{document}